\documentclass[a4paper]{article}
\textwidth 6.6in
\topmargin -0.4in
\textheight 9.2in
\oddsidemargin -0.25in
\evensidemargin -0.25in
\usepackage{enumerate}
\usepackage[english]{babel}
\usepackage[latin1]{inputenc}
\usepackage{graphicx}
\usepackage{color}
\usepackage{epsfig}
\usepackage{amsthm,amssymb,amsbsy,amsmath,amsfonts,amssymb,amscd,mathrsfs}
\usepackage{colortbl}
\usepackage{verbatim}
\usepackage{url}
\newtheorem{remark}{Remark}[section]
\newtheorem{proposition}{Proposition}[section]
\newtheorem{definition}{Definition}[section]
\newtheorem{lemma}{Lemma}[section]

\newtheorem{theorem}{Theorem}[section]
\newtheorem{corollary}{Corollary}[section]
\newtheorem{assumption}{Assumption}[section]
\newcommand{\vphi}{\varphi}

\newcommand{\eps}{\varepsilon}
\newcommand{\R}{\mathbb{R}}
\newcommand\be{\begin{equation}}
\newcommand\ee{\end{equation}}
\numberwithin{equation}{section}


\usepackage{xcolor}
\definecolor{linkcol}{rgb}{0.06,0.06,0.6}
\definecolor{citecol}{rgb}{0.6,0.04,0.04}
\definecolor{red}{rgb}{0.6,0.04,0.04}
\definecolor{blue}{rgb}{0.06,0.06,0.6}
\definecolor{green}{rgb}{0.06,0.6,0.06}
\definecolor{grey}{rgb}{0.85,0.85,0.85}

\usepackage[f]{esvect}          
\newcommand{\veps}{\varepsilon} %
\newcommand{\N}{\mathbb{N}}
\DeclareMathOperator{\GL}{GL}
\usepackage{tikz}
\usepackage{pgfkeys}
\usepackage{circuitikz}
\usetikzlibrary{arrows,shapes,backgrounds,patterns}
\usepackage{pifont}
\usepackage{tikzgraphicx}
\tikzstyle{every picture}+=[remember picture]
\tikzstyle{na} = [baseline=-.5ex]
\makeatletter
\newcommand{\gettikzxy}[3]{%
  \tikz@scan@one@point\pgfutil@firstofone#1\relax
  \edef#2{\the\pgf@x}%
  \edef#3{\the\pgf@y}%
}
\makeatother
\usepackage{mathtools}          
\newcommand{\arc}{\sigma}

\title{Tangency property and prior-saturation points in \\ minimal time problems in the plane}
\author{T. Bayen\footnote{Avignon Universit\'e, Laboratoire de Math\'ematiques d'Avignon (EA 2151) F-84018 {\tt\small terence.bayen@univ-avignon.fr}}
 ,
   O. Cots\footnote{Toulouse Univ., INP-ENSEEIHT, IRIT and CNRS, 2 rue Camichel, 31071 Toulouse, France  {\tt\small olivier.cots@irit.fr}}
}
\date{June 2, 2020}

\begin{document}
\maketitle
\begin{abstract}
In this paper, we consider minimal time problems governed by control-affine-systems in the plane, and 
we focus on the synthesis problem in presence of a singular locus that involves a saturation point for the singular control.  
After giving sufficient conditions on the data ensuring occurence of a prior-saturation point and a switching curve, we show 
that the bridge ({\it{i.e.}}, the optimal bang arc issued from the singular locus at this point) is tangent to the switching 
curve at the prior-saturation point. This property is proved using the Pontryagin Maximum Principle that also provides  
a set of non-linear equations that can be used to compute the prior-saturation  point. 
These issues are illustrated on a fed-batch model in bioprocesses 
and on a Magnetic Resonance Imaging (MRI) model for which minimal time syntheses for the point-to-point problem are discussed. 
\end{abstract}
{\bf{Keywords}}: Geometric optimal control, Minimum time problems, Singular arcs. 

\section{Introduction}
In this paper, we consider minimal time problems governed by single-input control-affine-systems in the plane
$$
\dot{x}(t)=f(x(t))+u(t)\, g(x(t)), \quad |u(t)|\leq 1, 
$$
where $f,g:\R^2\rightarrow \R^2$ are smooth vector fields. 
Syntheses for such problems have been investigated a lot in the literature (see, {\it{e.g.}}, \cite{bonnard,bosc,piccoli1,sussmann1,sussmann2,sussmann3}).  
In particular, an exhaustive description of the various encountered singularities can be found in \cite{bosc}, as well as an algorithm 
leading to the determination of optimal paths. It is worth mentioning that even though many techniques exist in this setting, 
the computation of an optimal feedback synthesis (global) remains in general difficult  because  of the occurence of geometric loci such as singular arcs, switching curves, cut-loci...
   
Our aim in this work is to focus on the notion of {\it{singular arc}} which appears in the synthesis when the switching function 
(the scalar product between the adjoint vector and the controlled vector field $g$) 
vanishes over a time interval. In the
context of control-affine-systems, singular arcs can generically be explicitly retrieved by solving non-degenerate linear equations 
(see, {\it{e.g.}}, \cite{kupka}).
Besides, in the two-dimensional case,  the corresponding singular control $u_s$ (which allows the associated trajectory to stay on the singular locus) 
can be expressed in feedback form 
$x\mapsto u_s[x]$. However, it may happen that $u_s$ becomes non admissible, {\it{i.e.}}, $x\mapsto u_s[x]$ takes values above the maximal value for the control (namely $1$ here). 
Such a situation naturally appears in several application models, see, {\it{e.g.}}, \cite{BHS,BMM2,ledje2,landfill}.  
In that case, we say that a {\it{saturation phenomenon}} occurs. The occurence of such a phenomenon implies the following (non-intuitive) property that,  
if a singular arc is optimal, then it should leave the singular locus at a so-called {\it{prior-saturation point}} before reaching the saturation point. 
This property has been studied in the literature in various situations such as 
for control-affine systems in dimension $2$ and $4$ (see, {\it{e.g.}}, \cite{schaettler,schaettler2,BDM95,BMM2} and references herein). 

Our main goal in this paper is to provide new qualitative properties on the minimum time synthesis in presence of a saturation point. 
More precisely, our objective is twofold:
\begin{itemize}
\item We  first give a set of conditions on the system that ensure  occurence of prior-saturation  showing that, under certain assumptions, the system leaves the singular arc at this point (before reaching the saturation point) with the maximal value for the control, see Proposition \ref{existence-presat}. This last arc is usually called {\it{bridge}} following the terminology as  in \cite{cots,cots2} (see also \cite{bonnard95,bonnard}). 
 \item Second, we  introduce a shooting function that allows an effective computation of the prior-saturation point. This mapping is used to 
 show our main result (Theorem \ref{thm-tangent}) which can be stated as follows: when the system exhibits a switching curve emanating from the prior-saturation point, then this curve is tangent to the bridge (in the cotangent bundle) at this point. 
\end{itemize}

The tangency property (in the state space) has been pointed out in several application models (see, {\it{e.g.}}, \cite{BMM2,cots}). To the best of our knowledge, 
this property has not been addressed previously in this general setting in the literature. It allows to better understand the construction of optimal paths 
locally at the prior-saturation point.  

The paper is structured as follows: in Section \ref{sec-general}, we recall classical expressions and properties 
of singular controls for control-affine-systems in the plane introducing the saturation phenomenon.  
In Section \ref{sec-prior}, we provide a set of  conditions involving the target set and the system ensuring the occurence of the prior-saturation phenomenon. 
In Section \ref{sec-tangent}, we show the tangency property between the switching curve emanating from a prior-saturation point and the bridge, 
and we describe how to compute the prior-saturation point thanks to a shooting function constructed via the Hamiltonian lifts of $f$ and $g$. 
Finally, we 
depict  this geometrical property in Section \ref{sec-numer} for a fed-batch model \cite{M99,BMM2} and MRI model \cite{cots,cots2}.   
This allows us to illustrate the notion of bridge in various contexts: first, when it connects a component of the singular locus to another one 
(see the MRI-model in Section \ref{sec-numer} and \cite{cots,cots2}), and then when it connects a component of 
a singular locus to an extended target set (see the fed-batch model in Section \ref{sec-numer} and \cite{M99,BMM2}). 

\section{Saturation phenomenon}{\label{sec-general}}
The purpose of this section is to recall some facts about  minimum time control problems in the plane that will allow us to introduce the saturation phenomenon. 
Throughout the paper, the standard inner product in $\R^2$ is
written $a\cdot b$ for $a,b\in \R^2$, and $a^\perp$ denotes the vector $a^\perp\coloneqq(-a_2,a_1)$ orthogonal to $a$.
The interior of a subset $S\subset \R^n$, $n\geq 2$, is denoted by $\mathrm{Int}(S)$.  

\subsection{Pontryagin's Principle}
We start by applying the classical optimality conditions provided by the Pontryagin Maximum Principle (PMP), see \cite{Pontry}. 
Let $f,g:\R^2 \rightarrow \R^2$  be two vector fields of class $C^\infty$, and consider the controlled dynamics:
\be{\label{sys1}}
\dot{x}(t)=f(x(t))+u(t)\, g(x(t)), 
\ee
with admissible controls in the set
$$
\mathcal{U}\coloneqq\{u:[0,+\infty) \rightarrow [-1,1]\; ; \; u \; \mathrm{meas}. \}. 
$$
Given an initial point $x_0\in \R^2$ and a non-empty closed subset $\mathcal{T}\subset \R^2$, we focus on the problem of driving \eqref{sys1}
in minimal time from $x_0$ to the target set $\mathcal{T}$:
\be{\label{OCP}}
\inf_{u \in \mathcal{U}} T_u \quad \mathrm{s.t.}\;\; x_u(T_u)\in \mathcal{T},
\ee 
where $x_u(\cdot)$ denotes the unique solution of \eqref{sys1} associated with the control $u$ such that $x_u(0)=x_0$, and $T_u\in [0,+\infty]$ is the first entry time of $x_u(\cdot)$ into the target set $\mathcal{T}$. 
We suppose hereafter that optimal trajectories exist\footnote{If the target can be reached from $x_0$ and if $f,g$ have linear growth, then \eqref{OCP} admits an optimal solution, thanks to  Filippov's Existence Theorem, see, {\it{e.g.}},  \cite{vinter}.} and we wish to apply the PMP  on \eqref{OCP}. The Hamiltonian associated with \eqref{OCP} is the function $H:\R^2\times \R^2\times \R \times \R \rightarrow \R$ defined as 
$$
H(x,p,p^0,u)\coloneqq p \cdot f(x)+u\, p \cdot g(x) +p^0.
$$
If $u$ is an optimal control and $x_u$ is the associated trajectory steering $x_0$ to the target set $\mathcal{T}$ in time $T_u \geq 0$, the following conditions are fulfilled:
\begin{itemize}
\item There exist $p^0 \leq 0$ and an absolutely continuous function $p:[0,T_u]\rightarrow \R^2$ satisfying the adjoint equation
\be{\label{adjoint}}
\dot{p}(t)=-\nabla_x H(x_u(t),p(t),p^0,u(t)) \quad  \mathrm{a.e.} \ t\in [0,T_u].
\ee
\item The pair $(p^0,p(\cdot))$ is non-zero. 
\item The optimal control $u$ satisfies the {\it{Hamiltonian condition}} 
\be{\label{PMP}}
u(t) \in \mathrm{argmax}_{\omega\in [-1,1]} H(x_u(t),p(t),p^0,\omega) \quad \mathrm{a.e.} \; t\in [0,T_u].
\ee
\item At the terminal time, the {\it{transversality condition}}\footnote{Here, $N_{\mathcal{T}}(x)$ stands for the (Mordukovitch) limiting normal cone 
to $\mathcal{T}$ at point $x\in \mathcal{T}$, see \cite{vinter}. It coincides with the normal cone in the sense of convex analysis when $\mathcal{T}$ is convex.} $p(T_u)\in -N_{\mathcal{T}}(x_u(T_u))$ is fulfilled. 
\end{itemize}
Recall that an extremal $(x_u(\cdot),p(\cdot),p^0,u(\cdot))$ satisfying \eqref{sys1} and \eqref{adjoint}-\eqref{PMP} 
is {\it{abnormal}} whenever $p^0=0$ and {\it{normal}} whenever $p^0\not=0$. In the latter case, we take $p^0=-1$ and the corresponding extremal is denoted by $(x_u(\cdot),p(\cdot),u(\cdot))$ and we shall then write $H(x,p,u)$ in place of $H(x,p,p^0,u)$. Since $T_u$ is free and \eqref{sys1} is autonomous, the Hamiltonian $H$ is zero along any extremal: for a.e.~$t\in [0,T_u]$, 
\be{\label{Hzero}}
H=p(t)\cdot f(x_u(t))+u(t) p(t)\cdot g(x_u(t))+p^0=0. 
\ee
 The {\it{switching function}} $\phi$ is defined as
\be
\label{eq:switching_function}
\phi(t)\coloneqq p(t) \cdot g(x_u(t)), \quad t\in [0,T_u], 
\ee
and it gives us (thanks to \eqref{PMP}) the following control law:
\be
\left\{
\begin{array}{lll}
\phi(t)>0 & \Rightarrow & u(t)=+1,\\
\phi(t)<0 & \Rightarrow & u(t)= -1.\\
\end{array}
\right.
\ee  
A {\it{switching time}} is an instant $t_c\in(0,T_u)$ such that the control $u$ is discontinuous at time $t_c$. We say that 
the corresponding extremal trajectory has a {\it{switching point}} at time $t_c$. 
Of particular interest is the case when there is a time interval $[t_1,t_2]$ such that the switching function vanishes over this interval, {\it{i.e.}}, 
$$
\phi(t)=p(t)\cdot g(x_u(t))=0, \quad t\in I.
$$
We then say that the extremal trajectory has a {\it{singular arc}} over $[t_1,t_2]$. 
Note that we shall suppose such an extremal to be normal, {\it{i.e.}}, $p^0\not=0$. Indeed, recall from 
\cite[Prop.~2 p.49]{bosc} that under generic conditions, abnormal extremals are bang-bang. 
By differentiating $\phi$ twice w.r.t.~$t$, one gets
$$
\dot{\phi}(t)=p(t)\cdot [f,g](x_u(t)),  \quad t\in [0,T_u], 
$$
where $[f,g](x)$ is the Lie bracket of $f$ and $g$ at point $x$, and 
$$
\ddot{\phi}(t)=p(t) \cdot [f,[f,g]](x_u(t))+u(t)\, p(t) \cdot [g,[f,g]](x_u(t)) \quad \mathrm{a.e.} \; t\in [0,T_u]. 
$$
The {\it{singular locus}} $\Delta_{SA}$ (in the state space) is defined as the (possibly empty) subset of $\R^2$ 
\be
\Delta_{SA}\coloneqq\{x\in \R^2 \; ; \; \mathrm{det}(g(x),[f,g](x))=0\}. 
\ee
For future reference, we set $\delta_{SA}(x)\coloneqq\mathrm{det}(g(x),[f,g](x))$ for  $x\in \R^2$. 
Note that if an extremal is singular over a time interval $[t_1,t_2]$, then one has $x_u(t)\in \Delta_{SA}$ for any $t\in [t_1,t_2]$ because 
$p(\cdot)$ must be non-zero and orthogonal to the vector space $\mathrm{span}\{g(x_u(t)),[f,g](x_u(t))\}$ over $[t_1,t_2]$.  
The {\it{singular control}} $u_s$ is then the value of the control for which the trajectory stays on the singular locus $\Delta_{SA}$. 
Supposing then that $\phi(t)=\dot{\phi}(t)=0$ over $[t_1,t_2]$ gives:
\be{\label{singular-control}}
u_s(t)\coloneqq-\frac{p(t) \cdot [f,[f,g]](x_u(t))}{p(t) \cdot[g,[f,g]](x_u(t))}, \quad t\in [0,T_u],
\ee
provided that $p(t) \cdot [g,[f,g]](x_u(t))$ is non zero for $t\in [t_1,t_2]$. 
This expression of the singular control does not guarantee that $u_s$ is admissible, that is, 
$u_s(t)\in [-1,1]$: 
\smallskip

\begin{itemize}
\item When we have $u_s(t)\in [-1,1]$, the point $x_u(t)$ is said 
{\it{hyperbolic}} if $p(t) \cdot[g,[f,g]](x_u(t))>0$, and {\it{elliptic}} if $p(t) \cdot[g,[f,g]](x_u(t))<0$ (see \cite{bonnard95,bonnard}). 
\item When we have $|u_s(t)|>1$ for some instant $t$, we say that a {\it{saturation phenomenon}} occurs and  
that the corresponding points of the singular locus are {\it{parabolic}} (see \cite{bonnard95,bonnard}). 
\end{itemize}
Our purpose in what follows is precisely to investigate properties 
of the synthesis of optimal paths when saturation occurs. To this end, we suppose 
in the rest of the paper that extremals are normal, {\it{i.e.}}, $p^0\not=0$ (we take hereafter $p^0=-1$). 
\subsection{Singular control and saturation phenomenon}
In this part, we derive classical expressions of the singular control in terms of feedback that will allow us to introduce saturation points 
(in terms of the data defining the system).   
The {\it{collinearity set}} associated with \eqref{sys1} is the (possibly empty) subset of $\R^2$ defined as
\be
\Delta_{0}\coloneqq\{x\in \R^2 \; ; \; \mathrm{det}(f(x),g(x))=0\}. 
\ee
Define two functions $\delta_0,\psi:\R^2\rightarrow \R$ as 
$\delta_0(x)\coloneqq\mathrm{det}(f(x),g(x))$, $x\in \R^2$, and  
\be{\label{feedback-singulier}}
\psi(x)\coloneqq-\frac{\mathrm{det}(g(x),[f,[f,g]](x))}{\mathrm{det}(g(x),[g,[f,g]](x))}, \quad x\in \R^2. 
\ee
The singular control can be then expressed as follows. 
\begin{lemma}
Suppose that $\Delta_{SA}\not=\emptyset$, that $x\mapsto \mathrm{det}(g(x),[g,[f,g]](x))$ is non-zero over $\Delta_{SA}$, and consider 
a singular arc defined over an interval $[t_1,t_2]$. Then, one has:
\be{\label{feedback}}
u_s(t)=\psi(x(t)), \quad t\in [t_1,t_2], 
\ee
where $x(\cdot)$ is the corresponding singular trajectory such that $x(t)\in \Delta_{SA}$ for $t\in [t_1,t_2]$. 
\end{lemma}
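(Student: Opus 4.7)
The plan is to derive \eqref{feedback} directly from the expression \eqref{singular-control} of $u_s$ by exploiting the two-dimensional geometry. The key observation is that in $\R^2$ the adjoint vector on a singular arc is forced (up to a scalar) by the orthogonality conditions it has to satisfy, so the ratio in \eqref{singular-control} can be rewritten as a ratio of determinants that does not depend on $p$.

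\medskip

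\textbf{Step 1: structure of $p(t)$ on the singular arc.} On $[t_1,t_2]$ one has $\phi(t)=p(t)\cdot g(x(t))=0$ and, differentiating once, $\dot\phi(t)=p(t)\cdot[f,g](x(t))=0$. Thus $p(t)$ is orthogonal to the subspace $\mathrm{span}\{g(x(t)),[f,g](x(t))\}$. Since $x(t)\in\Delta_{SA}$, this subspace is at most one-dimensional. The hypothesis $\mathrm{det}(g,[g,[f,g]])\neq 0$ along $\Delta_{SA}$ forces $g(x(t))\neq 0$ (otherwise the determinant would vanish), so $p(t)$ must be proportional to $g(x(t))^\perp$. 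Moreover, $p(t)\neq 0$: from the normal Hamiltonian condition \eqref{Hzero} with $p^0=-1$ and $\phi(t)=0$ we get $p(t)\cdot f(x(t))=1$, which rules out $p(t)=0$. Therefore there exists a scalar-valued function $\lambda(t)\neq 0$ such that
$$
p(t)=\lambda(t)\,g(x(t))^\perp,\quad t\in[t_1,t_2].
$$

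\medskip

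\textbf{Step 2: rewriting \eqref{singular-control} via determinants.} I would next use the elementary $\R^2$-identity $a^\perp\cdot b=\mathrm{det}(a,b)$. Substituting the expression of $p(t)$ from Step~1 into \eqref{singular-control}, the factor $\lambda(t)$ cancels in numerator and denominator, yielding
$$
u_s(t)=-\frac{g(x(t))^\perp\cdot[f,[f,g]](x(t))}{g(x(t))^\perp\cdot[g,[f,g]](x(t))}=-\frac{\mathrm{det}(g(x(t)),[f,[f,g]](x(t)))}{\mathrm{det}(g(x(t)),[g,[f,g]](x(t)))}=\psi(x(t)).
$$
The assumption that $\mathrm{det}(g,[g,[f,g]])$ does not vanish on $\Delta_{SA}$ guarantees that $\psi$ is well defined along the singular trajectory, and it also justifies a posteriori the denominator $p(t)\cdot[g,[f,g]](x(t))=\lambda(t)\,\mathrm{det}(g,[g,[f,g]])$ used in \eqref{singular-control}.

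\medskip

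There is no real obstacle here; the only delicate point is the justification that $p(t)\neq 0$ and $g(x(t))\neq 0$ on the singular arc, both of which follow cleanly from the standing normality assumption and from the non-degeneracy hypothesis. Everything else is a direct algebraic manipulation based on the $a^\perp\cdot b=\mathrm{det}(a,b)$ identity in the plane.
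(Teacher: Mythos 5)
Your proof is correct and follows essentially the same route as the paper: both arguments rest on the fact that on the singular arc $p(t)$ is (a nonzero multiple of) $g(x(t))^\perp$, so the scalar products in \eqref{singular-control} become determinants against $g$ and the proportionality factor cancels. The paper simply cites these determinant identities from Boscain--Piccoli (with the explicit normalization $\lambda(t)=-1/\delta_0(x(t))$ coming from $p\cdot f=1$), whereas you derive them; the content is the same.
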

\begin{proof} The proof is classical and combines 
\eqref{singular-control} together with the equalities 
$$
\begin{array}{c}
-\delta_0(x(t))\, p(t)\cdot [f,[f,g]](x(t))=\mathrm{det}(g(x(t)),[f,[f,g]](x(t))),  
\vspace{0.1cm} \\
-\delta_0(x(t))\, p(t)\cdot [g,[f,g]](x(t))=\mathrm{det}(g(x(t)),[g,[f,g]](x(t))), 
\end{array}
$$
see, {\it{e.g.}}, \cite[Lemma 10]{bosc}. 
\end{proof}
\begin{remark}
{\it{Steady-state singular points}} are defined as the points $x^\star\in \Delta_{SA}\cap\Delta_0$ such that  
$g(x^\star)\not=0$, see {\rm{\cite{bosc,BRS14}}} (if $\Delta_{SA}\cap\Delta_0\not=\emptyset$). 
Such points are equilibria of \eqref{sys1} with $u=\psi(x)$. 
A singular arc defined over a time interval $[t_1,t_2]$ does 
not contain such a point because $f(x(t))$ and $g(x(t))$ must be linearly independent over $[t_1,t_2]$. 
But, it can contain points $x^\star \in \Delta_{SA}\cap \Delta_{0}$ such that $g(x^\star)=0$. 
\end{remark}
In the sequel, we consider a parametrization of the set $\Delta_{SA}$ as follows. This will be useful to introduce the notion of saturation point.   
\begin{lemma}{\label{lem-para}}
Suppose that $\Delta_{SA}$ is non-empty and that $x\mapsto \mathrm{det}(g(x),[g,[f,g]](x))$ is non-zero over $\Delta_{SA}$. 
Then, 
{near each point $x_0\in \Delta_{SA}\backslash \Delta_0$}, the set $\Delta_{SA}\backslash \Delta_0$ 
can be locally parametrized by a one-to-one parametrization $\zeta:J\rightarrow \Delta_{SA}$, 
$\tau\mapsto \zeta(\tau)$ of class $C^1$, where $J$ is an interval of $\R$. 
\end{lemma}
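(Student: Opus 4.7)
The plan is to apply the implicit function theorem to the smooth function $\delta_{SA}$ at the point $x_0$: since $\Delta_{SA}=\{\delta_{SA}=0\}$, it suffices to establish $\nabla \delta_{SA}(x_0)\neq 0$, and then a $C^1$ (in fact $C^\infty$) local one-to-one parametrization $\zeta:J\to \Delta_{SA}$ of the level curve follows from the classical implicit function theorem. The main task is therefore to extract the non-vanishing of $\nabla \delta_{SA}(x_0)$ from the standing assumption that $\det(g,[g,[f,g]])$ does not vanish on $\Delta_{SA}$.

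The key computational step exploits the fact that, since $x_0\notin \Delta_0$, the vector fields $f$ and $g$ form a local frame in a neighborhood of $x_0$. I would then write
\[
[f,g]=A\, f+B\, g
\]
with $A,B$ smooth functions defined near $x_0$. Taking the determinant with $g$ yields
\[
\delta_{SA}=\det(g,[f,g])=-A\,\delta_0,
\]
so the condition $x_0\in \Delta_{SA}$ translates into $A(x_0)=0$. Expanding
\[
[g,[f,g]]=[g,A f+B g]=-A\,[f,g]+(g\cdot\nabla A)\,f+(g\cdot\nabla B)\,g
\]
by the Leibniz rule for brackets and evaluating at $x_0$ (where $A(x_0)=0$) gives
\[
\det(g,[g,[f,g]])(x_0)=-\bigl(g(x_0)\cdot\nabla A(x_0)\bigr)\,\delta_0(x_0).
\]

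By the assumption of the lemma this quantity is non-zero, and $\delta_0(x_0)\neq 0$ because $x_0\notin \Delta_0$; therefore $g(x_0)\cdot\nabla A(x_0)\neq 0$, and in particular $\nabla A(x_0)\neq 0$. Differentiating the identity $\delta_{SA}=-A\,\delta_0$ at $x_0$ and using $A(x_0)=0$, one finds $\nabla \delta_{SA}(x_0)=-\delta_0(x_0)\,\nabla A(x_0)\neq 0$. The implicit function theorem then produces the desired local $C^1$ one-to-one parametrization $\zeta:J\to \Delta_{SA}$ near $x_0$; since $\Delta_0$ is closed and $x_0\notin\Delta_0$, shrinking $J$ if necessary ensures that $\zeta(J)\subset \Delta_{SA}\setminus \Delta_0$. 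The only potentially delicate step is the bracket expansion leading to $\det(g,[g,[f,g]])(x_0)=-(g\cdot\nabla A)(x_0)\,\delta_0(x_0)$, which is however a direct Leibniz computation making crucial use of $[g,g]=0$ and of the cancellation $A(x_0)=0$.
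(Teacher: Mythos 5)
Your proof is correct and follows essentially the same route as the paper: both decompose $[f,g]=\alpha f+\beta g$ in the frame $(f,g)$ away from $\Delta_0$, derive $\delta_{SA}=-\alpha\,\delta_0$ and $\det(g,[g,[f,g]])=-\delta_0\,(\nabla\alpha\cdot g)$ on $\Delta_{SA}$, conclude $\nabla\delta_{SA}(x_0)=-\delta_0(x_0)\nabla\alpha(x_0)\ne 0$, and invoke the implicit function theorem. The bracket expansion you flag as delicate is exactly the computation the paper performs, and your closing remark about shrinking $J$ to stay off the closed set $\Delta_0$ is a harmless extra precaution.
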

\begin{proof}
For $x\notin \Delta_0$, 
one has $\mathrm{span}\{f(x),g(x)\}=\R^2$, hence, there exist 
$\alpha(x), \beta(x)\in \R$ such that
\be{\label{decompo1}}
[f,g](x)=\alpha(x) f(x)+\beta(x)g(x). 
\ee
By taking the determinant between $g(x)$ and $[f,g](x)$, and then between $f(x)$ and $[f,g](x)$, we find that for $x\notin \Delta_0$, 
$$
\alpha(x)=-\frac{\mathrm{det}(g(x),[f,g](x))}{\delta_0(x)}  \; \; \mathrm{and} \;  \; 
\beta(x)=\frac{\mathrm{det}(f(x),[f,g](x))}{\delta_0(x)}. 
$$
By computing $[g,[f,g]](x)$ thanks to \eqref{decompo1}, we get
$$
\mathrm{det}(g(x),[g,[f,g]](x))=-\delta_0(x) \nabla \alpha(x)\cdot g(x), \quad x\notin \Delta_0. 
$$
Since $x\mapsto \mathrm{det}(g(x),[g,[f,g]](x))$ is non-zero over $\Delta_{SA}$, the preceding equality implies that 
the scalar product $\nabla \alpha(x)\cdot g(x)$ is non-zero. 
On the other hand, $\Delta_{SA}$ 
is defined by the implicit equation $\delta_{SA}(x)=0$. 
Next, observe that 
\begin{equation}{\label{alphadeltaSA}}
x\notin \Delta_0 \; \Rightarrow  \delta_{SA}(x)=-\alpha(x)\delta_0(x).
\end{equation}
By taking the derivative, we find that for $x\notin\Delta_0$, one has
$$
\nabla \delta_{SA}(x)=-\delta_0(x) \nabla \alpha(x)-\alpha(x) \nabla \delta_0(x).
$$ 
Therefore, for $x\in\Delta_{SA} \backslash \Delta_0$, we obtain 
$\nabla \delta_{SA}(x)=-\delta_0(x) \nabla \alpha(x)$. 
We can  then conclude that for any point $x_0\in \Delta_{SA} \backslash \Delta_0$,  
the partial derivative $\partial_1 \alpha(x_0)$ (w.r.t.~$x_1$) or $\partial_2 \alpha(x_0)$ (w.r.t.~$x_2$) 
is non-zero. We are then in a position to apply the implicit function theorem to $\delta_{SA}$ {locally at each point $x_0\in \Delta_{SA}\backslash \Delta_0$}, which  implies the desired property. 
\end{proof}
When $\Delta_{SA} \cap \Delta_0$ is non-empty, the set 
$\Delta_{SA} \backslash \Delta_0$ can be then partitioned into (maximal) connected one dimensional submanifolds of $\Delta_{SA}$, according to the previous lemma.
Hence, we write this set as
$$
\Delta_{SA} \backslash \Delta_0=\bigcup_{k \in K} \gamma_k,
$$
where $K$ is an index set. Hereafter, to shorten, the terminology ``component'' refers to  a (maximal) connected one-dimensional manifold $\gamma_k$ with $k \in K$. 
Under the assumptions of Lemma \ref{lem-para}, given a component $\gamma_k$ of $\Delta_{SA}$, there is a parametrization $\zeta$ such that
$$
\gamma_k\coloneqq\{\zeta(\tau) \; ; \; \tau \in J\},
$$  
where $\zeta:J \rightarrow \R^2$ is $C^1$-mapping (injective) and $J$ is an interval. 
\begin{definition} 
A point $x^*\coloneqq\zeta(\tau^*)$ with $\tau^*\in \mathrm{Int}(J)$ 
is called {\it{saturation point}} if $|\psi(x^*)|=1$ and: 
\begin{itemize}
\item either, $|\psi(\zeta(\tau))|<1$ for any $\tau\in J$ such that $\tau<\tau^*$, and $|\psi(\zeta(\tau))|>1$ for any $\tau \in J$ such that $\tau >\tau^*$, 
\item or $|\psi(\zeta(\tau))|>1$ for any $\tau\in J$ such that $\tau<\tau^*$, and $|\psi(\zeta(\tau))|<1$ for any $\tau \in J$ such that $\tau >\tau^*$.
\end{itemize} 
\end{definition}
Note that, in this paper, we shall mainly consider those saturation points such that  
$\psi(x^*)=1$ where $\psi$ is increasing over $\gamma_k$.


Our next aim is to study the optimality 
of singular arcs in presence of a saturation point. 
\section{Existence of a prior-saturation point}{\label{sec-prior}}
In this section, we show that a prior-saturation phenomenon can occur whenever the system exhibits a saturation point. 
We start by introducing our main assumptions. 
\begin{assumption}{\label{main-hyp}}
The system \eqref{sys1} satisfies the following hypotheses: 
\begin{itemize} 
\item[$\mathrm{(i)}$] One has $\Delta_0=\emptyset$ and $\delta_0(x)<0$ for all $x\in \R^2$ {so that  
the singular locus $\Delta_{SA}$ is written $\Delta_{SA}\coloneqq\zeta(J)$ where $J\subset \R$ is an interval and $\zeta:J\rightarrow \Delta_{SA}$ is a $C^1$-mapping.}
\item[$\mathrm{(ii)}$] The set $\Delta_{SA}$ is non-empty, simply connected, and has exactly one saturation point {$x^*=\zeta(\tau^*)$ with $\psi(x^*)=1$ and $\tau^*\in J$.} Moreover, $\tau \mapsto \psi(\zeta(\tau))$ is increasing over $J$. 
\item[$\mathrm{(iii)}$] Along the singular locus, the strict (generalized) Legendre-Clebsch optimality condition is satisfied, that is, any singular extremal 
$(x_u(\cdot),p(\cdot),u(\cdot))$ defined over $[t_1,t_2]$ satisfies:
\be{\label{LC-strict}}
\frac{\partial }{\partial u} \frac{d^2}{dt^2} \frac{\partial H}{\partial u}(x_u(t),p(t),u(t)) > 0, \quad \forall t\in [t_1,t_2]. 
\ee
\item[$\mathrm{(iv)}$] If $\Gamma_{-}$ is the forward semi-orbit of \eqref{sys1} with $u=-1$ with the initial condition $x^*$ at time $0$, then
\be{\label{intersec0}}
\mathcal{T}\cap \Gamma_{-}=\emptyset.
\ee
\item[$\mathrm{(v)}$] The target $\mathcal{T}$ is reachable from every point $x_0\in \R^2$. 
\end{itemize}
\end{assumption}

\begin{remark}{\label{rem0}} {\rm{(i)}} The hypothesis $\Delta_0=\emptyset$ could be weakened considering in place of $\R^2$ an optimally\footnote{By optimally invariant subset, we mean a subset $\Omega \subset \R^2$ such that for any initial condition $x_0\in \Omega$, an optimal trajectory stays in $\Omega$.} invariant subset $\Omega$ (by the dynamics) containing only one component $\gamma_k$ and the target set. In addition, we would require the condition $\delta_0<0$ in $\Omega$. For simplicity, we supposed here that $\Omega=\R^2$. 
\\
{\rm{(ii)}} By the previous computations, we can observe that  
\eqref{LC-strict} is equivalent to
$$
\forall x\in \Delta_{SA}, \; |\psi(x)|\leq 1 \; \Rightarrow \; \mathrm{det}(g(x),[g,[f,g]](x))>0.  
$$
Recall that, under the strict Legendre-Clebsch condition, the singular arc is a {\it{turnpike}}, {\it{i.e.}}, it is time-minimizing in every neighborhood of a hyperbolic point of $\Delta_{SA}$, {\rm{\cite{bonnard95}}}. This property can be retrieved by the clock form argument {\rm{\cite{Hermes}}}.   
\\
(iii) In general, the saturation point $x^*$ is not unique (this depends on the case study). 
Whereas in the two examples of Section \ref{sec-numer} it is unique, let us mention {\rm{\cite{BD2020}}} in which several saturation points as well as bridges occur in the context of chemical reactors. 
\end{remark}

Under Assumption \ref{main-hyp},  $\Delta_{SA}$ 
partitions the state space into two simply connected (open) subsets $\Delta_{SA}^\pm$:
$$
\begin{array}{c}
\Delta_{SA}^+\coloneqq\{x\in \R^2 \; ; \; \mathrm{det}(g(x),[f,g](x))>0\}, \\
\Delta_{SA}^-\coloneqq\{x\in \R^2 \; ; \; \mathrm{det}(g(x),[f,g](x))<0\}.
\end{array}
$$
Given a normal extremal $(x_u(\cdot),p(\cdot),u(\cdot))$, the function
$$
t\mapsto \gamma_u(t)\coloneqq\beta(x_u(t))-\alpha(x_u(t))u(t), \quad t\in [0,T_u],
$$  
is also well-defined since $\Delta_0=\emptyset$.

\begin{lemma}
Along a normal extremal $(x_u(\cdot),p(\cdot),u(\cdot))$, the switching function $\phi$ satisfies the ODE
\be{\label{ode-phi}}
\dot{\phi}(t)=\gamma_u(t)\phi(t)+\alpha(x_u(t)) \quad \mathrm{a.e.} \; t\in [0,T_u].
\ee
\end{lemma}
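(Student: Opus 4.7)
The plan is to expand $\dot{\phi}(t)$ using the known formula $\dot{\phi}(t)=p(t)\cdot[f,g](x_u(t))$ and then exploit two ingredients: the decomposition \eqref{decompo1} of $[f,g]$ in the basis $\{f,g\}$ (which is legitimate everywhere since Assumption \ref{main-hyp}(i) guarantees $\Delta_0=\emptyset$), and the vanishing of the Hamiltonian along the normal extremal to eliminate $p\cdot f$ in favor of $\phi$ and $u$.

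More precisely, I would first substitute \eqref{decompo1} to write
\be
\dot{\phi}(t)=p(t)\cdot[f,g](x_u(t))=\alpha(x_u(t))\, p(t)\cdot f(x_u(t))+\beta(x_u(t))\, p(t)\cdot g(x_u(t)),
\ee
and recognize that the second term equals $\beta(x_u(t))\phi(t)$ by the definition \eqref{eq:switching_function} of $\phi$. Next I would use the identity \eqref{Hzero} with $p^0=-1$, namely $p(t)\cdot f(x_u(t))+u(t)\phi(t)-1=0$, to solve for $p(t)\cdot f(x_u(t))=1-u(t)\phi(t)$. Substituting this into the previous display yields
\be
\dot{\phi}(t)=\alpha(x_u(t))\bigl[1-u(t)\phi(t)\bigr]+\beta(x_u(t))\phi(t)=\alpha(x_u(t))+\bigl[\beta(x_u(t))-\alpha(x_u(t))u(t)\bigr]\phi(t),
\ee
which is exactly \eqref{ode-phi} upon recognizing $\gamma_u(t)=\beta(x_u(t))-\alpha(x_u(t))u(t)$.

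There is no real obstacle here; the argument is a one-line manipulation. The only thing worth flagging is that the decomposition \eqref{decompo1} is valid globally only because Assumption \ref{main-hyp}(i) ensures $\Delta_0=\emptyset$ (so that $\{f(x),g(x)\}$ spans $\R^2$ at every $x$), and that the elimination of $p\cdot f$ relies crucially on the normality $p^0=-1$; for an abnormal extremal one would still get $\dot{\phi}=\alpha(x)\, p\cdot f+\beta(x)\phi$, but no such clean closed-form ODE in $\phi$ alone. Everything else is just measurability/absolute-continuity of the composition, which follows from the assumed regularity of $f$, $g$, and the extremal triple.
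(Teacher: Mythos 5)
Your proof is correct and follows exactly the route the paper intends: decompose $[f,g]=\alpha f+\beta g$ via \eqref{decompo1} (valid since $\Delta_0=\emptyset$), and eliminate $p\cdot f$ using $H=0$ with $p^0=-1$ to get $\dot{\phi}=\alpha+(\beta-\alpha u)\phi=\gamma_u\phi+\alpha$. The paper's own proof is just a one-line pointer to these same two ingredients, so there is nothing to add.
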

\begin{proof} The proof follows using the expression of $\dot{\phi}$ and the fact that the Hamiltonian $H$ is constant equal to zero. 
\end{proof}

The next proposition 
shows that an extremal trajectory containing a singular arc until the point $x^\star$ is not optimal.

\begin{proposition}{\label{existence-presat}} 
Suppose that Assumption \ref{main-hyp} holds true, and consider an optimal trajectory steering $x_0$ to the target 
$\mathcal{T}$ in time $T_u$. 
Then, the corresponding extremal $(x_u(\cdot),p(\cdot),u(\cdot))$ does not contain a singular arc defined over a time interval $[t_1,t_2]$ such that 
$x_u(t_2)=x^*$.
\end{proposition}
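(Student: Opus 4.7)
My plan is to argue by contradiction: assume there is an optimal extremal $(x_u,p,u)$ with a singular arc on $[t_1,t_2]$ satisfying $x_u(t_2)=x^*$. Because $\psi(\zeta(\tau))$ is strictly increasing with value $1$ at $\tau^*$ (Assumption \ref{main-hyp}(ii)), the singular feedback exceeds $1$ immediately past $x^*$ and the control cannot remain singular; on some right-neighborhood $(t_2,t_2+\delta)$ it must be bang, $u\equiv +1$ or $u\equiv -1$. The strategy is to rule out both possibilities.

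The key jet calculation is as follows. On $[t_1,t_2]$, $\phi\equiv 0$ and $\dot\phi\equiv 0$, so by continuity $\phi(t_2)=\dot\phi(t_2)=0$. Differentiating \eqref{ode-phi} once and using $\alpha(x^*)=-\delta_{SA}(x^*)/\delta_0(x^*)=0$, I get, for a bang $u$ on $(t_2,t_2+\delta)$,
$$
\ddot\phi(t_2^+)=\nabla\alpha(x^*)\cdot\bigl(f(x^*)+u\,g(x^*)\bigr).
$$
Because $\psi(x^*)=1$, the singular velocity $f(x^*)+g(x^*)$ is tangent to $\Delta_{SA}$; combined with the identity $\nabla\delta_{SA}=-\delta_0\,\nabla\alpha$ on $\Delta_{SA}$ from the proof of Lemma \ref{lem-para}, this yields $\nabla\alpha(x^*)\cdot(f(x^*)+g(x^*))=0$. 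Hence $\ddot\phi(t_2^+)=0$ when $u=+1$, while $\ddot\phi(t_2^+)=-2\,\nabla\alpha(x^*)\cdot g(x^*)<0$ when $u=-1$, the sign coming from Assumption \ref{main-hyp}(iii) together with Remark \ref{rem0}(ii), which give $\nabla\alpha(x^*)\cdot g(x^*)>0$.

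Case $u\equiv -1$ on $(t_2,t_2+\delta)$. The sign $\ddot\phi(t_2^+)<0$ means $\phi$ is negative just after $t_2$ (consistent with $u=-1$), and the same computation, combined with $(f-g)\cdot\nabla\delta_{SA}(x^*)<0$, shows the trajectory enters $\Delta_{SA}^-$. Integrating \eqref{ode-phi} with $u=-1$ and using $\alpha<0$ on $\Delta_{SA}^-$ (since $\delta_{SA}<0$ and $\delta_0<0$ there), I argue that $\phi$ remains negative along the whole forward orbit $\Gamma_-$, so no further switching occurs and $x_u(\cdot)$ coincides with $\Gamma_-$ on $[t_2,+\infty)$. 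But then the target is reached on $\Gamma_-$, contradicting Assumption \ref{main-hyp}(iv).

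Case $u\equiv +1$ on $(t_2,t_2+\delta)$. Now the first three Taylor coefficients of $\phi$ at $t_2$ vanish, so I must go to third order. I would compute $\dddot\phi(t_2^+)$ by differentiating $\nabla\alpha(x)\cdot(f+g)(x)$ along the flow of $f+g$, and translate the strict monotonicity of $\tau\mapsto\psi(\zeta(\tau))$ at $\tau^*$ into the sign $\dddot\phi(t_2^+)<0$; this forces $\phi(t)<0$ for $t>t_2$ small, incompatible with $u=+1$. The hardest step is precisely this translation: identifying the third-order Lie-bracket expression $\dddot\phi(t_2^+)$ as a negative multiple of $\tfrac{d}{d\tau}\psi(\zeta(\tau))$ evaluated at $\tau^*$. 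This is the essential place where the orientation of the saturation (namely $\psi$ increasing rather than decreasing) enters the argument; the subtlety in the $u=-1$ case of confining the forward orbit to $\Delta_{SA}^-$ is secondary by comparison.
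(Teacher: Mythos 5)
Your overall strategy (argue by contradiction, split on the bang value just after $t_2$, use the ODE \eqref{ode-phi} together with the sign of $\alpha$ on $\Delta_{SA}^-$, and invoke Assumption \ref{main-hyp}(iv) to kill the $u=-1$ branch) is the same as the paper's, and your jet computation $\ddot\phi(t_2^+)=\nabla\alpha(x^*)\cdot\bigl(f(x^*)+u\,g(x^*)\bigr)$, with the vanishing for $u=+1$ and strict negativity for $u=-1$, is correct. But two load-bearing steps are missing. First, the $u=+1$ case is not actually proved: you reduce it to showing $\dddot\phi(t_2^+)<0$ and explicitly leave that computation undone; moreover the route is fragile, because Assumption \ref{main-hyp}(ii) only says that $\tau\mapsto\psi(\zeta(\tau))$ is increasing, which does not give $\tfrac{d}{d\tau}\psi(\zeta(\tau))\big|_{\tau=\tau^*}>0$, so the third-order coefficient you are counting on may vanish and the Taylor argument becomes inconclusive. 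The paper avoids any higher-order expansion: from $\psi(\zeta(\tau))>1$ for $\tau>\tau^*$ and \eqref{exclusion-tmp1} it gets $\nabla\delta_{SA}\cdot(f+g)<0$ along the singular locus beyond $x^*$, concludes that the $u=+1$ orbit enters the open set $\Delta_{SA}^-$ where $\alpha<0$, and then \eqref{ode-phi} with $\phi(t_2)=\dot\phi(t_2)=0$ forces $\phi<0$ on $(t_2,t_2+\eps]$, contradicting $u=+1$. This uses only the strict inequality $\psi>1$ past the saturation point, not a derivative at $\tau^*$.

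Second, in the $u=-1$ case the assertion that ``$\phi$ remains negative along the whole forward orbit, so no further switching occurs'' is precisely what must be proved, and confining the orbit to $\Delta_{SA}^-$ is not secondary: the sign $\alpha<0$ is only available while $x_u(t)\in\Delta_{SA}^-$. One has to rule out (a) a switching time $t_c$ with $x_u(t_c)\in\Delta_{SA}^-$, excluded because \eqref{ode-phi} gives $\dot\phi(t_c)=\alpha(x_u(t_c))<0$ whereas a switch from $-1$ to $+1$ requires $\dot\phi(t_c)\ge 0$; and (b) a return of the orbit to $\Delta_{SA}$, excluded because $\nabla\delta_{SA}\cdot(f-g)=\det(g,[g,[f,g]])\,(-1-\psi)<0$ at every point of $\Delta_{SA}$ (this needs $\psi>-1$ there, a consequence of the uniqueness of the saturation point), so $f-g$ points into $\Delta_{SA}^-$ and the orbit cannot cross back. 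Only then does the trajectory coincide with $\Gamma_-$ and Assumption \ref{main-hyp}(iv) yield the contradiction. A further small omission: before the case split you must exclude an accumulation of switching times at $t_2^+$ (chattering); the paper does this by invoking the strict Legendre--Clebsch condition \eqref{LC-strict}.
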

\begin{proof}
Suppose by contradiction that there is a time interval $[t_1,t_2]$ such that the trajectory is singular over $[t_1,t_2]$  with
$x_u(t_2)=x^*$. First, note that at time $t_2$, the vector $f(x_u(t_2))+g(x_u(t_2))$ is tangent to $\Delta_{SA}$. Indeed, since $x_u(t)\in\Delta_{SA}$ for $t\in [t_1, t_2]$, 
the vector $f(x_u(t))+\psi(x_u(t))g(x_u(t))$ is tangent to $\Delta_{SA}$ at every 
time $t\in [t_1,t_2]$. The result then follows because at time $t_2$, one has  
$\psi(x_u(t_2))=1$. 
 

From lemma \ref{lem-para}, observe that for $x\in \Delta_{SA}$, one has: 
\be{\label{tmp-signe}}
\begin{array}{rl}
\mathrm{det}(g(x),[g,[f,g]](x))&=-\delta_0(x)\, \nabla \alpha(x)\cdot g(x), \\ 
\mathrm{det}(g(x),[f,[f,g]](x))&=-\delta_0(x)\, \nabla \alpha(x)\cdot f(x).
\end{array}
\ee
We deduce the following equalities:
\begin{align}{\label{exclusion-tmp1}}
&\nabla \delta_{SA}(x)\cdot (f(x)+g(x))=\mathrm{det}(g(x),[g,[f,g]](x))\, (1-\psi(x)),\\
&\nabla \delta_{SA}(x)\cdot (f(x)-g(x))=\mathrm{det}(g(x),[g,[f,g]](x))\, (-1-\psi(x)).\notag
\end{align}
 Going back to the optimal trajectory, under the strict Legendre-Clebsch condition\footnote{Following for instance \cite[Section 2.8.4]{schaettler2}, chattering occurs for singular arcs of higher order (at least $2$) for which the Legendre-Clebsch condition is not fulfilled.}, there is 
no chattering phenomenon for the optimal control at time $t_2$. 
This excludes the trajectory to have an infinite number of switching times in a right neighborhood of $t=t_2$. Because the singular arc becomes non admissible, the trajectory is also not singular in a right neighborhood of $t_2$.

There remain now two possibilities for the trajectory $x_u(\cdot)$:   
 \\
- either one has $u=+1$ over some time interval $[t_2,t_2+\eps)$ with $\eps>0$, \\
- or one has $u=-1$ over some time interval $[t_2,t_2+\eps)$ with $\eps>0$. 

Suppose first that $u=+1$ over $[t_2,t_2+\eps)$. {Thanks to Assumption \ref{main-hyp} (ii), one has $\psi(\zeta(\tau))>1$ for every $\tau>\tau^*$ and Remark \ref{rem0} (ii) 
implies that
$$
\mathrm{det}(g(\zeta(\tau)),[g,[f,g]](\zeta(\tau)))>0. 
$$
From the expression of $\psi$ given by \eqref{feedback-singulier}, we deduce the inequalities
$$
\mathrm{det}(g(\zeta(\tau)),[g,[f,g]](\zeta(\tau)))>0 \; \; \mathrm{and}\; \; -\mathrm{det}(f(\zeta(\tau)),[g,[f,g]](\zeta(\tau)))>0,
$$
for every $\tau>\tau^*$. Taking into account \eqref{exclusion-tmp1}, we obtain that 
$$
\tau>\tau^* \; \Rightarrow \; \nabla \delta_{SA}(\zeta(\tau))\cdot (f(\zeta(\tau))+g(\zeta(\tau)))<0.
$$  
One can then infer that for $\tau>\tau^*$, each vector $f(\zeta(\tau))+g(\zeta(\tau))$ at $\zeta(\tau)\in \Delta_{SA}$ points into $\Delta_{SA}^-$. }
{It follows that 
the corresponding trajectory necessarily enters into $\Delta_{SA}^-$ implying (recall \eqref{alphadeltaSA})
$$
\mathrm{det}(g(x_u(t)),[f,g](x_u(t)))=-\delta_0(x_u(t))\alpha(x_u(t))<0
$$ 
for $t\in (t_2,t_2+\eps]$. From \eqref{ode-phi}, since $\phi(t_2)=\dot{\phi}(t_2)=0$, we obtain that $\phi(t)<0$ 
for  $t\in (t_2,t_2+\eps]$. From Pontryagin's Principle, $\phi<0$ implies that $u(t)=-1$. We thus have a contradiction. }

It follows that the optimal trajectory necessarily satisfies $u=-1$ in some time interval $(t_2,t_2+\eps]$, and thus it enters into the set $\Delta_{SA}^-$ because one has $\nabla \delta_{SA}(x^*)\cdot (f(x^*)-g(x^*))<0$.

From Assumption \ref{main-hyp}, the forward semi-orbit with $u=-1$ starting from $x^*$ does not reach the target set. Hence, $x_u(\cdot)$ must have a switching point 
to $u=+1$ in $\Delta_{SA}^-$ or it must reach $\Delta_{SA}$ with the control $u=-1$.   
We see from \eqref{ode-phi} that the first case is not possible because at a switching time $t_c$ such that $x_u(t_c)\in \Delta_{SA}^-$, we would have 
$\dot{\phi}(t_c)\geq 0$ in contradiction with $\alpha(x_u(t_c))<0$. 

Suppose now that $x_u(\cdot)$ reaches $\Delta_{SA}$ at some point $x\coloneqq\zeta(\tau)$ with $\tau<\tau^*$. 
Then, we obtain 
$\nabla \delta_{SA}(x)\cdot (f(x)-g(x))<0$ since $\psi(x)>-1$. But, 
as $x_u(\cdot)$ reaches $\Delta_{SA}$ with $u=-1$ at point $x$, the trajectory enters into the set $\Delta_{SA}\cup\Delta_{SA}^+$ and we must have 
$\nabla \delta_{SA}(x)\cdot (f(x)-g(x)) \geq 0$ ($\nabla \delta_{SA}(x)$ is collinear to the outward normal vector to $\Delta_{SA}$ at point $x$). This gives a contradiction. 
In the same way, the trajectory cannot reach 
a point $x\in \Delta_{SA}$ such that $x=\zeta(\tau)$ with $\tau>\tau^*$. 

We can conclude that for any time $t\geq t_2$, one has $u(t)=-1$, but then, the optimal trajectory cannot reach the target set which is a contradiction 
(Assumption \ref{main-hyp} (iv)). This concludes the proof. 
\end{proof}

As an example, if $x_0\coloneqq\zeta(\tau_0)$ belongs to the singular locus with $\tau_0<\tau^*$, and if an optimal trajectory starting from $x_0$ contains a singular arc, then the trajectory should leave the singular locus before reaching $x^*$. Let us insist on the fact that 
this property of leaving the singular locus before reaching $x^*$ relies on the fact that the optimal trajectory should contain a singular arc.  
In the fed-batch model presented in Section \ref{fb-sec}, this property can be easily verified (see \cite{BMM2}).  

We now introduce the following definition (in line with \cite{ledje2,schaettler,schaettler2}).  
Hereafter, the notation $\mathcal{S}_{[\tau'_0,\tau_0]}$ 
 denotes the singular arc comprised between the points $\zeta(\tau'_0)$ and 
$\zeta(\tau_0)$ with $\tau'_0\leq \tau_0<\tau^*$.

\begin{definition}{\label{defi-prior}}
Let $\tau_0<\tau^*$. 
A point $x_e\coloneqq\zeta(\tau_e)\in \Delta_{SA}$ with $\tau_0< \tau_e<\tau^*$ is called a {\it{prior-saturation point}} if 
the singular arc $\mathcal{S}_{[\tau_0,\tau]}$ ceases to be optimal for  $\tau\geq \tau_e$, that is,
$$
\tau_e=\sup \{\tau \in J \; \; ; \; \; \mathcal{S}_{[\tau_0,\tau]} \;  \mathrm{is} \;  \mathrm{optimal}\}.
$$
\end{definition}
This definition makes sense only for initial conditions $\zeta(\tau_0)$ with $\tau_0<\tau^*$ because for $\tau_0\geq \tau^*$, 
optimal controls are not singular (since the singular control is non-admissible). Note that $x_e$ a priori depends on the initial condition $\zeta(\tau_0)$ on the singular locus (hence, we shall write $x_e=x_e(\tau_0)$ hereafter if necessary).
\begin{proposition}{\label{prior-vs-init}} Suppose that Assumption \ref{main-hyp} holds true and  
that there are $\tau_1,\tau_2\in J$ with $\tau_1<\tau_2<\tau^*$ 
such that any optimal trajectory starting from $\zeta(\tau_0)$ with $\tau_0\in [\tau_1,\tau_2)$ contains a singular 
arc $\mathcal{S}_{[\tau_0,\tau_2]}$. Then, one has $x_e(\tau_0)=x_e(\tau_2)$ for every $\tau_0\in [\tau_1,\tau_2]$. 
\end{proposition}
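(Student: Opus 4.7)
\noindent\emph{Proof plan.} The argument is a direct application of Bellman's dynamic programming principle, plus an $\eta$-approximation to accommodate the supremum in Definition~\ref{defi-prior}. Write $V(\tau)\coloneqq T^*(\zeta(\tau))$ for the value function along the singular locus and $T_s(\tau,\tau')$ for the time needed to traverse $\mathcal{S}_{[\tau,\tau']}$ under the singular feedback $\psi$, $\tau\le\tau'$. The case $\tau_0=\tau_2$ is tautological, so fix $\tau_0\in[\tau_1,\tau_2)$. By hypothesis, every optimal trajectory from $\zeta(\tau_0)$ to $\mathcal{T}$ contains $\mathcal{S}_{[\tau_0,\tau_2]}$; in particular it reaches $\zeta(\tau_2)$ at time $T_s(\tau_0,\tau_2)$. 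Bellman's principle then yields
\[
V(\tau_0)=T_s(\tau_0,\tau_2)+V(\tau_2),
\]
and the restriction at $\zeta(\tau_2)$ of any such optimal trajectory is itself optimal for the minimum-time problem from $\zeta(\tau_2)$ to $\mathcal{T}$. Note also that this hypothesis gives $\tau_e(\tau_0)\ge\tau_2$.

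The inequality $\tau_e(\tau_0)\le\tau_e(\tau_2)$ follows by restriction. For any $\eta>0$ small enough that $\tau_e(\tau_0)-\eta\ge\tau_2$, the definition of $\tau_e(\tau_0)$ as a supremum provides some $\tau'\in(\tau_e(\tau_0)-\eta,\tau_e(\tau_0)]$ and an optimal trajectory from $\zeta(\tau_0)$ containing $\mathcal{S}_{[\tau_0,\tau']}$. Its tail at $\zeta(\tau_2)$ is optimal from $\zeta(\tau_2)$ and contains $\mathcal{S}_{[\tau_2,\tau']}$, so $\tau_e(\tau_2)\ge\tau'>\tau_e(\tau_0)-\eta$; letting $\eta\to 0^+$ gives the claim.

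The reverse inequality $\tau_e(\tau_0)\ge\tau_e(\tau_2)$ is obtained by concatenation. For any $\eta>0$, the definition of $\tau_e(\tau_2)$ yields some $\tau''\in(\tau_e(\tau_2)-\eta,\tau_e(\tau_2)]$ and an optimal trajectory from $\zeta(\tau_2)$ to $\mathcal{T}$ containing $\mathcal{S}_{[\tau_2,\tau'']}$, reaching $\mathcal{T}$ in time $V(\tau_2)$. Prepending the singular arc $\mathcal{S}_{[\tau_0,\tau_2]}$ (with the continuous singular feedback $\psi$, so the concatenation is an admissible trajectory of~\eqref{sys1}) produces a trajectory from $\zeta(\tau_0)$ to $\mathcal{T}$ of total duration $T_s(\tau_0,\tau_2)+V(\tau_2)=V(\tau_0)$; it is therefore optimal from $\zeta(\tau_0)$. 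By construction it contains $\mathcal{S}_{[\tau_0,\tau'']}$, whence $\tau_e(\tau_0)\ge\tau''>\tau_e(\tau_2)-\eta$; letting $\eta\to 0^+$ concludes. Combining the two inequalities gives $\tau_e(\tau_0)=\tau_e(\tau_2)$, hence $x_e(\tau_0)=x_e(\tau_2)$.

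The only delicate point is that the supremum in Definition~\ref{defi-prior} need not be attained, which is why the two inequalities are obtained through an $\eta\to 0^+$ limit rather than by directly selecting an optimal trajectory that traverses $\mathcal{S}_{[\tau_0,\tau_e(\tau_0)]}$. Aside from this detail, the argument is purely dynamic-programming-based: Assumption~\ref{main-hyp} is used only to guarantee existence of optimal trajectories, and the Pontryagin conditions never enter beyond the hypothesis that the singular arc $\mathcal{S}_{[\tau_0,\tau_2]}$ is common to all optimal trajectories issued from $\zeta(\tau_0)$.
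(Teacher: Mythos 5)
Your proof is correct and follows essentially the same route as the paper's: both rest on the dynamic programming principle that the tail (past $\zeta(\tau_2)$) of an optimal trajectory issued from $\zeta(\tau_0)$ is itself optimal from $\zeta(\tau_2)$, and conversely that prepending the common singular arc $\mathcal{S}_{[\tau_0,\tau_2]}$ to an optimal trajectory from $\zeta(\tau_2)$ yields an optimal trajectory from $\zeta(\tau_0)$, whence the two prior-saturation points coincide. Your version is simply more explicit than the paper's short contradiction argument, in particular about the supremum in Definition~\ref{defi-prior} possibly not being attained, which you handle with the $\eta$-approximation.
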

\begin{proof}
Consider an optimal trajectory $\gamma$ 
starting from $\zeta(\tau_0)\in \Delta_{SA}$ 
with $\tau_0\in [\tau_1,\tau_2]$. The hypothesis of the proposition implies that this trajectory passes though the point $\zeta(\tau_2)$. In addition, from the initial condition $\zeta(\tau_2)$, 
an optimal trajectory necessarily leaves the singular locus at a prior saturation point denoted by 
$\bar x_e$. It follows that the optimal trajectory starting from $\zeta(\tau_0)$ also leaves $\Delta_{SA}$ at the same point $\bar x_e$. Otherwise, $\gamma$ would leave 
$\Delta_{SA}$ at a point $x_e\not=\bar x_e$. Since 
$\gamma$ is optimal, the optimal trajectory from  $\zeta(\tau_2)$ would also leave $\Delta_{SA}$ at $x_e$ implying a contradiction. 
\end{proof}
This property implies  that for every initial conditions  $x_0\coloneqq\zeta(\tau_0)\in \Delta_{SA}$ such that $\tau_0\in [\tau_1,\tau_2]$, then 
the corresponding optimal path has a singular arc until the point $x_e(\tau_2)$ and a switching point at this point. 
\begin{remark} 
Such a situation (in particular the existence of an interval $[\tau_1,\tau_2]$ as in Proposition \ref{prior-vs-init}) is encountered in the fed-batch model presented in Section \ref{fb-sec} (see also {\rm \cite{BMM2}}).  Note also that for initial conditions 
$\zeta(\tau_0)\in \Delta_{SA}$ with $\tau_0<\tau^*$, $\tau_0$ close to $\tau^*$, we can expect that $\tau_e(\tau_0)=\tau_0$, {\it{i.e.}}, the singular arc (altough being admissible) is no longer optimal.
\end{remark}
%

\begin{remark}
In addition to Assumption \ref{main-hyp} (in particular \eqref{intersec0}), if we suppose that $\mathcal{T}$ is not reachable with the constant control $u=-1$ 
from those points of $\Delta_{SA}$ located between $x_e$ and $x^*$ ({\it{i.e.}} corresponding to $\tau\in [\tau_e,\tau^*]$),  
then the maximal value for the control $u=+1$ is locally optimal from the prior-saturation point $x_e$. 
In other words, the {\it{bridge}} (the last arc leaving $\Delta_{SA}$) corresponds to $u=+1$. This can be proved by using similar arguments as 
for proving Proposition \ref{prior-vs-init}. 
Since the singular arc is a turnpike, this additional hypothesis also implies the existence of a switching curve emanating from $x_e$. 
Our next aim is precisely to investigate more into details geometric properties of optimal paths at the point $x_e$. 
\end{remark}


\section{Tangency property and prior-saturation phenomenon}{\label{sec-tangent}}
The aim of this section is to prove the tangency property as stated in Theorem \ref{thm-tangent}.  

\subsection{Introduction to prior-saturation lift and tangency property}\label{sec:prior_lift_intro}

In this section, we first introduce the concept of {\it{prior-saturation lift}} and discuss its local uniqueness.
We also provide a set of nonlinear equations allowing to compute prior-saturation lifts given by the PMP. 
We end this section with an introduction to the tangency property on an example.


\begin{definition}
Let $x_e$ be a prior-saturation point. 
Any point $z_e$ in the cotangent space at $x_e$ is called a \emph{prior-saturation lift} of $x_e$.
\end{definition}

To introduce the computation of prior-saturation lifts given by the PMP, let us start with an example.
Consider a target set $\mathcal{T} \coloneqq \{x_f\}$, $x_f \in \R^2$,
with an optimal trajectory of the form $\arc_- \arc_s \arc_+$, where $\arc_-$, $\arc_+$ and $\arc_s$ are arcs, respectively,
with control $u=-1$, $u=+1$ and $u = u_s$, where $u_s$ is the singular control.
Assume
that the switching point between the singular arc $\arc_s$ (supposed to be non-empty)
and the positive bang arc $\arc_+$ is a prior-saturation point.
The PMP\footnote{Since $\mathcal{T} \coloneqq \{x_f\}$ is a point, there is no transversality condition at the terminal time.}  gives necessary optimality conditions satisfied by this extremal trajectory that we can write as a system of nonlinear equations,
the so-called {\it{shooting equations}}.
Actually, the shooting equations, that we present hereinafter, will give us
the initial adjoint vector and the switching times together with the switching points.
From this, we can retrieve the extremal trajectory.
Before defining this set of equations, we need to introduce some notation.
%
We define the Hamiltonian lifts associated with $f$ and $g$ as $$H_f(z) \coloneqq p \cdot f(x) \; ; \; H_g(z) \coloneqq p \cdot g(x),$$ where $z \coloneqq (x,p)$
belongs to the cotangent bundle. 
All the other Hamiltonian lifts in the rest of the paper are defined like this.
Define also the Hamiltonians $H_\pm \coloneqq H_f \pm H_g$ and $H_s \coloneqq H_f + u_s H_g$, where $u_s$ is viewed here as a function of $z$:
\begin{equation}
    u_s(z) \coloneqq -\frac{p \cdot [f,[f,g]](x)}{p \cdot[g,[f,g]](x)} = -\frac{H_{[f,[f,g]]}(z)}{H_{[g,[f,g]]}(z)}.
    \label{eq:singular_control_z}
\end{equation}
For any Hamiltonian $H$ we define the {\it{Hamiltonian system}} $\vv{H} \coloneqq (\partial_p H, -\partial_x H)$, and finally, we introduce the
\emph{exponential mapping} 
$\exp(t\vphi)(z_0)$ as the solution at time $t$ of the differential equation
$\dot{z}(s) = \vphi(z(s))$ with initial condition  $z(0) = z_0$, where $\vphi$ is supposed to be smooth.
The {shooting equations} are then given by 
\[
    S(y) = 0, \quad y \coloneqq (p_0, t_1, t_2, t_f, z_1, z_2) \in \R^{n+3+(2n)\times 2}, \quad n\coloneqq2,
\]
where the \emph{shooting function} is defined by
\begin{equation}
      S(y) \coloneqq 
    \begin{pmatrix}
        H_g(z_1) \\
        H_{[f,g]}(z_1) \\
        H_+(\exp( (t_f-t_2) \vv{H_+})(z_2)) + p^0 \\
        \pi(\exp( (t_f-t_2) \vv{H_+})(z_2) ) - x_f \\
        z_1 - \exp( t_1 \vv{H_-})(x_0, p_0) \\
        z_2 - \exp( (t_2-t_1) \vv{H_s})(z_1)
    \end{pmatrix},  
    \label{eq:shooting_equation_ex}
\end{equation}
where $\pi(x,p)\coloneqq x$, and where $x_0\in\R^2$ is given and $p^0 = -1$ in the normal case.
The two first equations mean that the trajectory is entering the singular locus at $z_1$.
Hence, the second arc is a singular arc.
The third equation takes into account the free terminal time.
It could be replaced by $H_-(x_0, p_0) + p^0 = 0$ since the maximized Hamiltonian is constant along the extremal.
The fourth equation implies that the last bang arc reaches the target $\mathcal{T} = \{x_f\}$ at the final time $t_f$,
and the last two equations are the so-called {\it{matching conditions}} 
which give the switching points. 
Let consider a solution $y^* \coloneqq (p_0, t_1, t_2, t_f, z_1, z_2)$
to $S(y)=0$ associated with the optimal trajectory described before. Then, the point
$\pi(z_2)$ is a prior-saturation point, and so, $z_2$ is a prior-saturation lift.

Let us now discuss the uniqueness of the prior-saturation lift, considering for instance, a smooth and local one-parameter family of initial
conditions $x_0(\alpha)$, $\alpha \in (-\veps, \veps)$, $\veps > 0$, in relation with the construction of optimal syntheses (see
section \ref{sec-numer}) and in relation with Proposition \ref{prior-vs-init}. 
Let us denote by 
$y^*(\alpha) \coloneqq (p_0(\alpha), t_1(\alpha), t_2(\alpha), t_f(\alpha), z_1(\alpha), z_2(\alpha))$ a family of solutions to the equation $S(y, \alpha)=0$
for $\alpha \in (-\veps, \veps)$, where $S(\cdot,\alpha)$ is defined as 
\eqref{eq:shooting_equation_ex} but with the initial condition $x_0(\alpha)$ in place of
$x_0$. Let us assume that for any $\alpha\in (-\eps,\eps)$, the corresponding trajectory is
an optimal trajectory of the form $\arc_- \arc_s \arc_+$.
%
%
%
In addition, suppose that the lengths $t_1$, $t_2-t_1$ and $t_f-t_2$ are positive,
that is, each arc is defined on a time interval of positive length. 
In this setting, for any $\alpha$, we have $x_e \coloneqq \pi(z_2(0)) = \pi(z_2(\alpha))$, that is, the prior-saturation point $x_e$ is locally unique.
This is related to Proposition \ref{prior-vs-init} and illustrated on Figure~\ref{fig:bsb_sols}.  
Besides, under some other assumptions, see Proposition \ref{prop:prior_lift_unique}
and remark \ref{rmk:assumptions_lift}, the prior-saturation lift $z_e \coloneqq z_2(0)$ 
is also locally unique and so in this case we have also $z_2(0) = z_2(\alpha)$ for any 
$\alpha \in (-\veps,\veps)$. 

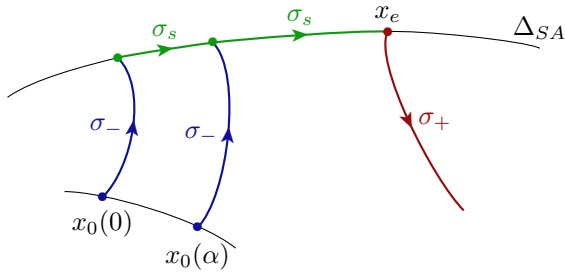
\begin{figure}[ht!]

    \begin{center}
    \begin{tikzpicture}[scale=0.05]

    \coordinate (A) at (-20,0);
    \coordinate (B) at (120,13);
    \coordinate (C) at (-5,-25);
    \coordinate (D) at (40,-40);
    \coordinate (x6) at (100,-30);

    
    \coordinate (x1) at (5,-26.5);
    \coordinate (x2) at (30,-34.5);
    \coordinate (x3) at (9,10.5);
    \coordinate (x4) at (34,14.5);
    \coordinate (x5) at (80,17.4);
    
    \draw [thin] (C) .. controls +(10,0) and +(-10,8) .. (D);
    
    \draw [thin, black] (A) .. controls +(7,5) and +(-8,-2) .. (x3);
    \draw [thin, black] (x5) .. controls +(8,0.5) and +(-4,2) .. (B);

    \draw [thick, green] (x3) .. controls +(4,1) and +(-4,-0.5) .. (x4) node[midway, sloped] {\scriptsize \ding{228}}
    node[midway, above]{$\textcolor{green}{\arc_s}$};
    
    \draw [thick, green] (x4) .. controls +(8,1) and +(-8,0.5) .. (x5) node[midway, sloped] {\scriptsize \ding{228}}
    node[midway, above]{$\textcolor{green}{\arc_s}$};
    
    \draw [thick, blue] (x1) .. controls +(8,8) and +(8,-8) .. (x3) node[midway, sloped] {\scriptsize \ding{228}}
    node[midway, left]{$\textcolor{blue}{\arc_-}$}; 
    
    \draw [thick, blue] (x2) .. controls +(8,8) and +(8,-8) .. (x4) node[midway, sloped] {\scriptsize \ding{228}}
    node[midway, left]{$\textcolor{blue}{\arc_-}$};
    
    \draw [thick, red] (x5) .. controls +(-4,-8) and +(-8,8) .. (x6) node[midway, sloped] {\scriptsize \ding{228}}
    node[midway, right]{$\textcolor{red}{\arc_+}$};
    
    \draw [above] (x5) node{$x_{e}$};
    \draw [above] (B)  node{$\Delta_{SA}$};
    \draw [below] ([yshift=-3em] x1) node{$x_0(0)$};
    \draw [below] ([yshift=-5em] x2) node{$x_0(\alpha)$};
    
     \draw [blue] (x1) node{\footnotesize $\bullet$}; 
    \draw [blue] (x2) node{\footnotesize $\bullet$}; 
    \draw [green] (x3) node{\footnotesize $\bullet$}; 
    \draw [green] (x4) node{\footnotesize $\bullet$}; 
    \draw [red] (x5) node{\footnotesize $\bullet$};

    \end{tikzpicture}
    \end{center}
    
    \vspace{-0.5em}
    \caption{Local uniqueness of the prior-saturation point $x_e$.}
    \label{fig:bsb_sols}
\end{figure}

Assuming that the prior-saturation lift $z_e$ is locally unique, we can compute it with a set of equations
excerpt from the shooting equations but with some minor modifications.
Roughly speaking, the main idea is to consider the particular case where the initial condition is the prior-saturation point,
that is such that $x_0 = x_e$. In this case, we have $t_1 = t_2 = 0$ and 
$z_1 = z_2 = (x_e, p_0) = z_e$.
With these considerations in mind we introduce 
\[
    F_{\mathrm{ex}}(t_b, z_b) \coloneqq 
    \begin{pmatrix}
        H_g(\exp(-t_b \vv{H_+})(z_b)) \\
        H_{[f,g]}(\exp(-t_b \vv{H_+})(z_b)) \\
        H_+(z_b) + p^0 \\
        \pi(z_b) - x_f
    \end{pmatrix},
\]
with $F_{\mathrm{ex}} : \R^{5} \to \R^{5}$.
Note that the exponential mapping is here computed by backward integration. Hence, with the preceding notation, we have
$z_b = \exp( (t_f-t_2) \vv{H_+})(z_2) = \exp( t_f \vv{H_+})(z_e)$ and $t_b = t_f-t_2 = t_f$. At the end, the prior-saturation lift is
simply given by 
\[
    z_e = \exp(-t_b \vv{H_+})(z_b),
\]
for a couple $(t_b, z_b)$ solution of $F_{\mathrm{ex}} = 0$.

%
\paragraph*{Tangency property.}
We end this section with an introduction to the tangency property. 
Let us start with solutions of the form $\arc_- \arc_s \arc_+$,  
considering a smooth and local one-parameter family of initial conditions $x_0(\alpha)$, $\alpha \in (-\veps, \veps)$, $\veps > 0$, 
but assuming that for $\alpha = 0$, the optimal solution is of the form $\arc_- \arc_s \arc_+$ with $\arc_s$ reduced to a single
point, that is, $t_2(0)-t_1(0) = 0$, with $$y^*(0) \coloneqq (p_0(0), t_1(0), t_2(0), t_f(0), z_1(0), z_2(0)),$$ the solution to the associated
shooting equations, still denoted $S(y,\alpha) = 0$.
Assume also that for $\alpha > 0$, we are in the previous case, that is one has $t_2(\alpha)-t_1(\alpha) > 0$ with 
$
y^*(\alpha) \coloneqq (p_0(\alpha), t_1(\alpha), t_2(\alpha), t_f(\alpha), z_1(\alpha), z_2(\alpha))
$
the corresponding solution of
$S(\cdot, \alpha)=0$. The prior-saturation lift is thus given by $z_e = z_2(\alpha)$ for $\alpha \in [0,\veps)$.

The idea is now to consider the case where there is a bifurcation in the structure of the optimal trajectories when $\alpha=0$.
We thus assume that for $\alpha \in (-\veps,0)$,  the solutions are of the form $\arc_- \arc_+$ and we denote by $z_1(\alpha)$ the
switching point (in the cotangent bundle) between the two arcs. In this setting, there exists a switching locus in the optimal synthesis
denoted $\Sigma_- \cup \Sigma_0$, where 
\[
    \Sigma_- \coloneqq \{z_1(\alpha);~ \alpha \in (-\veps,0]\} \quad \text{and} \quad \Sigma_0 \coloneqq \{ z_1(0) = z_e \}.
\]

The aim of the next section is to prove that the semi-orbit $\Gamma_+$ of $\dot{z} = \vv{H_+}(z)$ starting from $z_e$ is tangent
to the switching curve $\Sigma_- \cup \Sigma_0$ at the prior-saturation lift $z_e$ in a general frame.
This is precisely the {\it{tangency property}} (see Fig.~\ref{fig:tangence}). 

\begin{figure}[ht!]

    \begin{center}
    \begin{tikzpicture}[scale=0.05]

        \coordinate (A) at (-10,-10);
        \coordinate (B) at (60,60);
        \coordinate (C) at (140,15);
    
        \coordinate (x1) at (30,-10);
        \coordinate (x2) at (95,-15);
        \coordinate (x3) at (25.5,31);
        \coordinate (x4) at (90,18.5);
        \coordinate (x5) at (75,50);
        \coordinate (x6) at (115,45);
        
        \draw [thin, black] (A) .. controls +(5,15) and +(-10,-5) .. (B) node[pos=0.8, left]{$\Sigma_\mathrm{SA}~$};
        
        \draw [thick, black] (x3) .. controls +(20,-10) and +(-15,0) .. (C) node[pos=0.85, above]{$\Sigma_-$};
    
        \draw [thin, blue] (x1) .. controls +(-4,10) and +(-2,-6) .. (x3) node[midway, sloped, rotate=180] {\scriptsize \ding{228}};
        \draw [thin, blue] (x2) .. controls +(-4,10) and +(-2,-6) .. (x4) node[midway, sloped, rotate=180] {\scriptsize \ding{228}};
        \draw [thick, red] (x3) .. controls +(30,-15) and +(-4,-6) .. (x5) node[midway, sloped] {\scriptsize \ding{228}} node[near end, left]{$\Gamma_+~$};
        \draw [thin, red] (x4) .. controls +(10,5) and +(-4,-6) .. (x6) node[midway, sloped] {\scriptsize \ding{228}};
        
        \draw [left] (x3) node{$z_{e}$};
        \draw [below] (x1) node{$z_0(0)$};
        \draw [below] (x2) node{$z_0(\alpha)$, $\alpha < 0$};
        \draw [below left] (x4) node{$z_1(\alpha)$};
        
        \draw [blue] (x1) node{\footnotesize $\bullet$}; 
        \draw [blue] (x2) node{\footnotesize $\bullet$}; 
        \draw [red] (x3) node{\footnotesize $\bullet$}; 
        \draw [red] (x4) node{\footnotesize $\bullet$};

    \end{tikzpicture}
    \end{center}
    
    \vspace{-0.5em}
    \caption{Illustration of the tangency property between $\Gamma_+$ and $\Sigma_- \cup \Sigma_0$ at the prior-saturation lift $z_e$.
    The \emph{singular locus} in the cotangent bundle is $\Sigma_\mathrm{SA} \coloneqq \{z\in \R^{2n}\; ; \;  H_g(z) = H_{[f,g]}(z) = 0\}$.}
    \label{fig:tangence}
\end{figure}
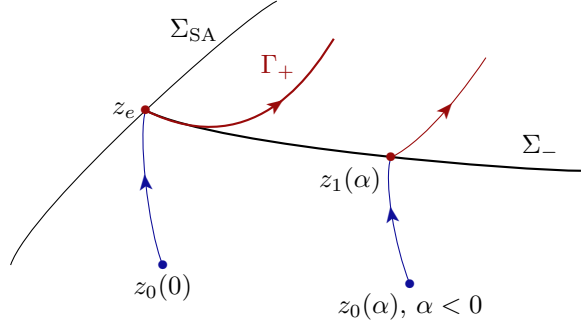

\subsection{Proof of the tangency property}

From a general point of view, we shall assume that the prior-saturation lift is given by solving a set of nonlinear equations 
$$
F(t_b, z_b, \lambda)=0
$$
with
\begin{equation}
    F(t_b, z_b, \lambda) \coloneqq
    \begin{pmatrix}
        H_{[f,g]}(\exp(-t_b \vv{H_+})(z_b)) \\
        G(t_b, z_b, \lambda)
    \end{pmatrix},
    \label{eq:F_prior_lift}
\end{equation}
where $\lambda \in \R^k$ is a vector of $k \in \N$ parameters, where $F$ is a function from $\R^{5+k}$ to 
$\R^{5+k}$ and where $G:\R^{5+k} \rightarrow \R^{4+k}$ is defined by
\begin{equation}
    G(t_b, z_b, \lambda) \coloneqq
    \begin{pmatrix}
        H_g(\exp(-t_b \vv{H_+})(z_b)) \\
        H_+(z_b) + p^0 \\
        \Psi(z_b, \lambda)
    \end{pmatrix},
    \label{eq:G_prior_lift}
\end{equation}
with $\Psi:\R^{4+k} \rightarrow \R^{2+k}$ a given function and $p^0 = -1$ considering the normal case.
We assume that all the functions $F$, $G$ and $\Psi$ are smooth.
It is important to notice that the mapping $\Psi$ does not depend on $t_b$ and that we can replace $H_+$ by $H_-$ without any loss of generality.
In the previous example from section \ref{sec:prior_lift_intro}, we have (with a slight abuse of notation) $\Psi(z_b) = \pi(z_b) - x_f$
which corresponds to the simplest case where there are no transversality conditions and no additional parameters, that is $k=0$.
For a more complex structure of the form $\arc_- \arc_s \arc_+ \arc_-$,
the parameter $\lambda$ would be the last switching time between the $\arc_+$ and $\arc_-$ arcs. In this case, $\Psi$ would contain
the additional switching condition $H_g=0$ at this time.

Let $(t_b^*, z_b^*, \lambda^*) \in \R^{5+k}$ be a solution to the equation $F=0$ and define 
\begin{equation}
    z_e \coloneqq \exp(-t_b^* \vv{H_+})(z_b^*) \in \Sigma_\mathrm{SA} := \{z\in \R^{2n}\; ; \;  H_g(z) = H_{[f,g]}(z) = 0\}.
    \label{eq:ze_prior_lift}
\end{equation}
We introduce the following assumptions at the point $z_e$. 
\begin{assumption}{\label{assumption:us_non_saturating}}
    We have $H_{[g,[f,g]]}(z_e) \ne 0$ and $u_s(z_e)<1$ with $u_s$ the singular control given by \eqref{eq:singular_control_z}.
\end{assumption}
\begin{assumption}{\label{assumption:Gprime_invertible}}
    The matrix 
    \[
        \begin{bmatrix}
            \displaystyle \frac{\partial G}{\partial z_b}(t_b^*, z_b^*, \lambda^*) & 
            \displaystyle \frac{\partial G}{\partial \lambda}(t_b^*, z_b^*, \lambda^*)
        \end{bmatrix}
        \in \GL_{4+k}(\R),
    \]
    {\it{i.e.}}, it is invertible in $\R^{(4+k)\times(4+k)}$. 
\end{assumption}

\begin{remark}
\label{rmk:assumptions_lift}
Assumption \ref{assumption:us_non_saturating} is related to the prior-saturation phenomenon while in combination with 
Assumption \ref{assumption:Gprime_invertible}, it is related to the well-posedness of the shooting system $F=0$.
Besides, the point $z_e$ is locally unique under these assumptions, according to the following result.
\end{remark}

\begin{proposition}
    \label{prop:prior_lift_unique}
    Suppose that Assumptions \ref{assumption:us_non_saturating} and \ref{assumption:Gprime_invertible} hold true. Then, 
    \[
        F'(t_b^*, z_b^*, \lambda^*) \in \GL_{5+k}(\R).
    \]
    Moreover $z_e$ (defined by \eqref{eq:ze_prior_lift}) is locally unique.
\end{proposition}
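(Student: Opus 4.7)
The plan is to show that the Jacobian $F'(t_b^*, z_b^*, \lambda^*)$ admits an (almost) block-triangular structure in which the two diagonal blocks are invertible for independent reasons: one by Assumption~\ref{assumption:Gprime_invertible}, the other by Assumption~\ref{assumption:us_non_saturating}. Local uniqueness of $z_e$ will then drop out of the inverse function theorem.

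First, write the Jacobian as the $(5+k)\times(5+k)$ block matrix
\[
F'(t_b^*, z_b^*, \lambda^*) \;=\; \begin{pmatrix} a & b & 0 \\ c & D & E \end{pmatrix},
\]
with scalar $a$, row $b\in\R^{1\times 4}$, column $c\in\R^{4+k}$, and $[D\mid E]$ the matrix appearing in Assumption~\ref{assumption:Gprime_invertible}; the upper-right zero is automatic because $H_{[f,g]}(\exp(-t_b\vv{H_+})(z_b))$ has no $\lambda$-dependence. The first key step is to verify that $c = 0$. Both $H_+(z_b)+p^0$ and $\Psi(z_b,\lambda)$ are $t_b$-free, so they contribute zero; the only nontrivial entry is
\[
\partial_{t_b} H_g\bigl(\exp(-t_b\vv{H_+})(z_b)\bigr)\Big|_{(t_b^*,z_b^*)} \;=\; -\{H_g,H_+\}(z_e),
\]
which, via $\{H_X,H_Y\}=-H_{[X,Y]}$ and $H_{[g,g]}=0$, reduces to a scalar multiple of $H_{[f,g]}(z_e)$, and this vanishes because $F(t_b^*,z_b^*,\lambda^*)=0$ forces $z_e\in\Sigma_{\mathrm{SA}}$.

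With $c=0$ the Jacobian becomes block-triangular, so $\det F' = a\cdot\det[D\mid E]$; the second factor is nonzero by Assumption~\ref{assumption:Gprime_invertible}. The same Poisson-bracket computation applied to $H_{[f,g]}$ along $\zeta(t_b)\coloneqq\exp(-t_b\vv{H_+})(z_b)$ yields
\[
a \;=\; -\{H_{[f,g]}, H_+\}(z_e) \;=\; -H_{[f,[f,g]]}(z_e) - H_{[g,[f,g]]}(z_e),
\]
and substituting $H_{[f,[f,g]]}(z_e) = -u_s(z_e)\,H_{[g,[f,g]]}(z_e)$ from \eqref{eq:singular_control_z} collapses this into the clean expression
\[
a \;=\; \bigl(u_s(z_e)-1\bigr)\,H_{[g,[f,g]]}(z_e).
\]
Both factors are nonzero by Assumption~\ref{assumption:us_non_saturating}, so $a\neq 0$ and therefore $F'(t_b^*,z_b^*,\lambda^*)\in\GL_{5+k}(\R)$.

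To conclude, the inverse function theorem implies that $F$ is a local diffeomorphism at $(t_b^*,z_b^*,\lambda^*)$, so $F=0$ admits only this solution in a neighborhood; in particular, its image $z_e = \exp(-t_b^*\vv{H_+})(z_b^*)$ is locally unique. The one fiddly point to watch will be tracking signs in the Poisson-bracket/Lie-bracket identity along the \emph{backward} flow $-\vv{H_+}$, so that both the vanishing of $c$ (a multiple of $H_{[f,g]}(z_e)$) and the factorization of $a$ as $(u_s(z_e)-1)\,H_{[g,[f,g]]}(z_e)$ appear cleanly; beyond that bookkeeping, nothing deeper than bracket calculus and the inverse function theorem is required.
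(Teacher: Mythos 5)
Your proof is correct and follows essentially the same route as the paper: you exhibit the Jacobian's block-triangular structure by observing that the $t_b$-derivative of $G$ vanishes at the solution (since $H_{[f,g]}(z_e)=0$), factor the determinant as the product of the $(1,1)$ entry $\pm\bigl(1-u_s(z_e)\bigr)H_{[g,[f,g]]}(z_e)$ (nonzero by Assumption \ref{assumption:us_non_saturating}) with $\det\bigl[\partial G/\partial z_b \mid \partial G/\partial\lambda\bigr]$ (nonzero by Assumption \ref{assumption:Gprime_invertible}), and conclude by the inverse function theorem. Your Poisson-bracket computations are simply a more explicit rendering of the quantities $a$ and $b$ that the paper states directly.
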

\begin{proof}
    The Jacobian of the mapping $F$ at the point $(t_b^*, z_b^*, \lambda^*)$ is given by:
    \[
        F'(t_b^*, z_b^*, \lambda^*) =
        \begin{bmatrix}
            -a & * & * \\
            -b & \displaystyle \frac{\partial G}{\partial z_b}(t_b^*, z_b^*, \lambda^*) &
            \displaystyle \frac{\partial G}{\partial \lambda}(t_b^*, z_b^*, \lambda^*)
        \end{bmatrix},
    \]
    where $a \coloneqq H_{[f,[f,g]]}(z_e)+H_{[g,[f,g]]}(z_e)$ and $b \coloneqq (H_{[f,g]}(z_e), 0, 0)$.  
    Then, we have $F'(t_b^*, z_b^*, \lambda^*) \in \GL_{5+k}(\R)$,
    noticing that $b=0$ since $F(t_b^*, z_b^*, \lambda^*) = 0$, that $a \ne 0$ since
    $u_s(z_e)<1$ and that $H_{[g,[f,g]]}(z_e)\ne 0$ by Assumption 
    \ref{assumption:us_non_saturating}. By the inverse function theorem $z_e$ is thus locally
    unique. 
\end{proof}

\begin{lemma}{\label{lmm:switching_curve}}
    Suppose that Assumption \ref{assumption:Gprime_invertible} holds true. Then, there exists $\eps>0$ and a $C^1$-map 
    $t_b \mapsto \sigma(t_b) \coloneqq (z_b(t_b), \lambda(t_b)) \in \R^{4+k}$ defined over  $I_\eps\coloneqq(t_b^*-\veps,t_b^*+\veps)$, that satisfies 
    \begin{equation}{\label{IFT1}}
    \forall t_b\in I_\eps, \quad G(t_b,\sigma(t_b))=0. 
    \end{equation}
    In addition, one has $\sigma(t_b^*) = (z_b^*, \lambda^*)$ and $\sigma'(t_b^*) = 0_{\R^{4+k}}$. 
\end{lemma}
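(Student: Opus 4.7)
The plan is to apply the implicit function theorem to $G$ at the point $(t_b^*,z_b^*,\lambda^*)$, and then to read off $\sigma'(t_b^*)$ by implicit differentiation, exploiting the fact that $(t_b^*,z_b^*,\lambda^*)$ solves the full system $F=0$.

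First, since $F(t_b^*,z_b^*,\lambda^*)=0$, the lower block yields $G(t_b^*,z_b^*,\lambda^*)=0$. By Assumption \ref{assumption:Gprime_invertible}, the partial Jacobian
$\bigl[\partial_{z_b} G\ \ \partial_\lambda G\bigr]$ at $(t_b^*,z_b^*,\lambda^*)$ is an element of $\GL_{4+k}(\R)$, so the classical implicit function theorem provides $\eps>0$ and a $C^1$ map $t_b\mapsto \sigma(t_b)=(z_b(t_b),\lambda(t_b))$ defined on $I_\eps\coloneqq(t_b^*-\eps,t_b^*+\eps)$, uniquely determined in a neighborhood of $(z_b^*,\lambda^*)$, such that \eqref{IFT1} holds and $\sigma(t_b^*)=(z_b^*,\lambda^*)$.

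Next, I differentiate the identity $G(t_b,\sigma(t_b))=0$ with respect to $t_b$ to obtain
\[
 \partial_{t_b} G(t_b,\sigma(t_b)) + \bigl[\partial_{z_b} G\ \ \partial_\lambda G\bigr](t_b,\sigma(t_b))\,\sigma'(t_b) = 0,
\]
and evaluate at $t_b=t_b^*$. Inspecting \eqref{eq:G_prior_lift}, only the first component of $G$ depends on $t_b$, through $H_g(\exp(-t_b\vv{H_+})(z_b))$; the second component depends only on $z_b$, and $\Psi$ depends only on $(z_b,\lambda)$. A direct computation using the Poisson-bracket identity $\{H_f,H_g\}=H_{[f,g]}$ (and $\{H_g,H_g\}=0$) gives
\[
 \frac{d}{dt_b}\bigl[H_g(\exp(-t_b\vv{H_+})(z_b))\bigr]\Big|_{(t_b^*,z_b^*)} \,=\, \pm H_{[f,g]}(z_e),
\]
which is precisely the quantity whose vanishing is enforced by the first equation of $F(t_b^*,z_b^*,\lambda^*)=0$. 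Hence $\partial_{t_b} G(t_b^*,z_b^*,\lambda^*)=0$.

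Combining these two facts with the invertibility of $\bigl[\partial_{z_b} G\ \ \partial_\lambda G\bigr]$ at $(t_b^*,z_b^*,\lambda^*)$ yields $\sigma'(t_b^*)=0_{\R^{4+k}}$, which is the claimed identity. The only subtle point is to notice that $\partial_{t_b} G$ vanishes at the solution; there is no genuine obstacle beyond this observation, since everything else is a textbook application of the implicit function theorem.
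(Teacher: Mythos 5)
Your proposal is correct and follows essentially the same route as the paper: apply the implicit function theorem to $G$ using Assumption \ref{assumption:Gprime_invertible}, then differentiate \eqref{IFT1} and observe that $\partial_{t_b}G(t_b^*,z_b^*,\lambda^*)=(H_{[f,g]}(z_e),0_{\R^{3+k}})=0$ because the first component of $F=0$ forces $H_{[f,g]}(z_e)=0$. The paper's proof is exactly this computation, so there is nothing to add.
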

\begin{proof}
    The existence of $\sigma$ follows from the implicit function theorem applied to the mapping $G$ at $(t_b^*, z_b^*, \lambda^*)$ which also gives \eqref{IFT1}. The derivative of $\sigma$ is then obtained from \eqref{IFT1}:
    \[
        \sigma'(t_b) = -
        \begin{bmatrix}
            \displaystyle \frac{\partial G}{\partial z_b}[t_b] & 
            \displaystyle \frac{\partial G}{\partial \lambda}[t_b]
        \end{bmatrix}^{-1}
        \cdot
        \frac{\partial G}{\partial t_b}[t_b], \quad t_b \in I_\eps, 
    \]
    where $[t_b]$ stands for $(t_b, \sigma(t_b))$. Since 
    \[
        \frac{\partial G}{\partial t_b}[t_b^*]= (H_{[f,g]}(z_e), 0_{\R^{3+k}}) = 0_{\R^{4+k}},
    \]
    the result follows.
\end{proof}

    Let us introduce the mapping $\vphi(t_b) \coloneqq \exp(-t_b \vv{H_+})(z_b(t_b))$ for $t_b \in I_\eps$ and define 
    \begin{equation}
        \Sigma \coloneqq \{ \vphi(t_b) \; ; \; t_b \in I_\veps \}.
        \label{eq:switching_curve}
    \end{equation}

\begin{remark}
    The curve $\Sigma$ is a switching curve in the contangent bundle since one has $H_g(\vphi(t_b))=0$ by definition of $G$.
    However, this switching curve is not necessarily optimal, that is, the optimal synthesis, with respect to the initial condition,
    may not contain $\Sigma$. 
    Let us stratify $\Sigma$ according to $\Sigma = \Sigma_- \cup \Sigma_0 \cup \Sigma_+$, with
    \[
        \begin{array}{l}
            \Sigma_- \coloneqq \{\vphi(t_b) \; ; \; t_b \in (t_b^*-\veps,t_b^*)\}, \vspace{0.1cm}\\
            \Sigma_0 \coloneqq \{\vphi(t_b^*)\} = \{z_e\}, \vspace{0.1cm}\\
            \Sigma_+ \coloneqq \{\vphi(t_b) \; ; \; t_b \in (t_b^*,t_b^*+\veps)\}.
        \end{array}
    \]
    A typical situation is when $\Sigma_- \cup \Sigma_0$
    is contained in the optimal synthesis while $\Sigma_+$ is not optimal for local and/or global optimality reasons.
    See the end of Section \ref{sec:prior_lift_intro} for an example of this typical situation.
\end{remark}

Our first main result is given by Proposition \ref{prior-vs-init} which states the existence of a prior-saturation point $x_e$ in the state
space under Assumption \ref{main-hyp}. Our second main result is the following.
\begin{theorem}{\label{thm-tangent}}
    Suppose the existence of a triple $(t_b^*, z_b^*, \lambda^*) \in \R^{5+k}$ such that $F(t_b^*, z_b^*, \lambda^*)= 0$, 
    with $F$ defined by \eqref{eq:F_prior_lift} and set $z_e \coloneqq \exp(-t_b^* \vv{H_+})(z_b^*)$.
    Suppose also that Assumption \ref{assumption:Gprime_invertible} holds true.
    Then, the switching curve $\Sigma$ given by \eqref{eq:switching_curve} is tangent at $z_e$ to the forward
    semi-orbit $\Gamma_+$ of $\dot{z} = \vv{H_+}(z)$ starting from $z_e$.
\end{theorem}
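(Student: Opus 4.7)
The plan is to show that the tangent vector to $\Sigma$ at $z_e$ is collinear to $\vv{H_+}(z_e)$, which is the tangent vector to $\Gamma_+$ at $z_e$. The whole argument reduces to computing $\varphi'(t_b^*)$ and applying Lemma~\ref{lmm:switching_curve}.

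First, I would invoke Lemma~\ref{lmm:switching_curve} to obtain the $C^1$ map $\sigma(t_b) = (z_b(t_b), \lambda(t_b))$ satisfying $G(t_b, \sigma(t_b)) = 0$ on a neighborhood $I_\eps$ of $t_b^*$, with $\sigma(t_b^*) = (z_b^*, \lambda^*)$ and crucially $\sigma'(t_b^*) = 0$. In particular this gives $z_b'(t_b^*) = 0 \in \R^4$, which is the key vanishing that drives the tangency.

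Next I would differentiate $\varphi(t_b) = \exp(-t_b \vv{H_+})(z_b(t_b))$ with respect to $t_b$. Writing $\Phi_s \coloneqq \exp(s \vv{H_+})$ for the Hamiltonian flow of $H_+$, the chain rule gives
\[
    \varphi'(t_b) = -\vv{H_+}(\Phi_{-t_b}(z_b(t_b))) + D\Phi_{-t_b}(z_b(t_b)) \cdot z_b'(t_b), \quad t_b \in I_\eps.
\]
Evaluating at $t_b = t_b^*$ and using $z_b'(t_b^*) = 0$ together with the definition $z_e = \Phi_{-t_b^*}(z_b^*)$ yields
\[
    \varphi'(t_b^*) = -\vv{H_+}(z_e).
\]

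Finally, the forward semi-orbit $\Gamma_+$ of $\dot{z} = \vv{H_+}(z)$ issued from $z_e$ admits $\vv{H_+}(z_e)$ as tangent vector at $z_e$. Since $\varphi(t_b^*) = z_e$ and $\varphi'(t_b^*) = -\vv{H_+}(z_e)$ is collinear to $\vv{H_+}(z_e)$, the curves $\Sigma$ and $\Gamma_+$ are tangent at $z_e$, which is the claimed tangency property. The only substantive ingredient is the vanishing $\sigma'(t_b^*) = 0$, already supplied by Lemma~\ref{lmm:switching_curve}; the rest is a direct chain-rule computation, so there is essentially no obstacle once the lemma is in hand. Note that Assumption~\ref{assumption:us_non_saturating} is not needed in this step: it was only used upstream, through Proposition~\ref{prop:prior_lift_unique}, to guarantee local uniqueness of $z_e$.
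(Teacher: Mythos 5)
Your proposal is correct and follows essentially the same route as the paper: invoke Lemma \ref{lmm:switching_curve} to get $\sigma'(t_b^*)=0$, differentiate $\vphi(t_b)=\exp(-t_b\vv{H_+})(z_b(t_b))$ by the chain rule, and conclude that $\vphi'(t_b^*)=-\vv{H_+}(z_e)$ is collinear to the tangent of $\Gamma_+$ at $z_e$. Your closing remark that Assumption \ref{assumption:us_non_saturating} is not needed here is also consistent with the theorem's hypotheses.
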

\begin{proof}
    From Assumption \ref{assumption:Gprime_invertible} and by lemma \ref{lmm:switching_curve}, one can define
    the switching curve $\Sigma$ by \eqref{eq:switching_curve}.
    To prove the tangency property, we have to show that $\vphi'(t_b^*)$ is collinear to $\vv{H_+}(z_e)$. For any $t_b \in I_\eps$,
    we have
    \[
        \vphi'(t_b) = -\vv{H_+}(\vphi(t_b)) + \exp(-t_b \vv{H_+})'(z_b(t_b)) \cdot z_b'(t_b).
    \]
    %
    By lemma \ref{lmm:switching_curve}, one has $\sigma'(t_b^*)=0$ thus $z_b'(t_b^*)=0$ and we get
    $\vphi'(t_b^*) = -\vv{H_+}(\vphi(t_b^*)) =-\vv{H_+}(z_e)$, which concludes the proof.
\end{proof}
\begin{remark}
    It is worth to mention that the tangency property is proved in the cotangent bundle, and thus it is also true in the state space at a prior saturation point (under the assumptions of Proposition \ref{existence-presat}).
\end{remark}

Setting $\xi(z) \coloneqq (H_g(z), H_{[f,g]}(z))$ the singular locus $\Sigma_\mathrm{SA}$ can be written $\Sigma_\mathrm{SA} = \xi^{-1}(\{0_{\R^2}\})$,
and we have the following relation between the singular locus and the switching curve.
\begin{corollary}
    \label{cor:tansversality_property}
    Suppose that $\xi$ is a submersion at $z_e$ and
    that Assumptions \ref{assumption:us_non_saturating} and \ref{assumption:Gprime_invertible} hold true.  
    Then the switching curve $\Sigma$ is transverse to the singular locus $\Sigma_\mathrm{SA}$ at $z_e$.
\end{corollary}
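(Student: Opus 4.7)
The plan is to translate the transversality claim into a single linear-algebraic condition at $z_e$ and then verify it by a short Poisson-bracket computation. By Theorem \ref{thm-tangent}, $\Sigma$ is a $C^1$ curve at $z_e$ whose tangent line is spanned by $\vv{H_+}(z_e)$. Since $\xi$ is assumed to be a submersion at $z_e$, the singular locus $\Sigma_\mathrm{SA} = \xi^{-1}(\{0_{\R^2}\})$ is a local submanifold with
\[
T_{z_e}\Sigma_\mathrm{SA} = \ker \xi'(z_e).
\]
Transversality at $z_e$ therefore reduces to the condition $\vv{H_+}(z_e) \notin \ker \xi'(z_e)$, i.e. $\xi'(z_e)\,\vv{H_+}(z_e) \ne 0_{\R^2}$.

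The two components of $\xi'(z_e)\,\vv{H_+}(z_e)$ are the Lie derivatives of $H_g$ and $H_{[f,g]}$ along $\vv{H_+}$, evaluated at $z_e$. I would use the standard identity $d\phi(z)\cdot \vv{H}(z) = \{H,\phi\}(z)$ together with the bilinearity and skew-symmetry of the Poisson bracket and the Hamiltonian-lift identity $\{H_f,H_g\} = H_{[f,g]}$. Splitting $H_+ = H_f + H_g$, the first component collapses to $\{H_f,H_g\}(z_e) = H_{[f,g]}(z_e)$, which vanishes by definition of $z_e \in \Sigma_\mathrm{SA}$. This vanishing is expected: it reflects the fact that the condition $H_g \equiv 0$ is already built into the switching curve $\Sigma$, so the pairing of $dH_g$ with the tangent to $\Sigma$ must be zero.

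All the content is therefore in the second component. The same bracket identities give
\[
\{H_+,H_{[f,g]}\}(z_e) = H_{[f,[f,g]]}(z_e) + H_{[g,[f,g]]}(z_e),
\]
and substituting the definition \eqref{eq:singular_control_z} of the singular control rewrites this as $(1 - u_s(z_e))\, H_{[g,[f,g]]}(z_e)$. Under Assumption \ref{assumption:us_non_saturating}, both factors are strictly nonzero ($u_s(z_e)<1$ and $H_{[g,[f,g]]}(z_e) \ne 0$), so this second component is nonzero. Hence $\xi'(z_e)\,\vv{H_+}(z_e) \ne 0_{\R^2}$ and the transversality follows.

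There is no genuine obstacle in this plan: once Theorem \ref{thm-tangent} has pinned down the tangent direction of $\Sigma$ at $z_e$, the corollary reduces to a one-line Poisson-bracket identity, with the only mild care needed being the sign conventions for $\vv{H}$ and $\{\cdot,\cdot\}$. The nonvanishing of the final expression $(1-u_s(z_e))\,H_{[g,[f,g]]}(z_e)$ is exactly the geometric content that Assumption \ref{assumption:us_non_saturating} encodes at the prior-saturation lift, which is why that assumption is the natural hypothesis for this corollary.
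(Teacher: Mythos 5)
Your proposal is correct and follows essentially the same route as the paper: both reduce transversality to $\xi'(z_e)\,\vv{H_+}(z_e)\ne 0_{\R^2}$ using the tangent direction from Theorem \ref{thm-tangent} and the kernel characterization of $T_{z_e}\Sigma_\mathrm{SA}$, and both identify the resulting vector as $(H_{[f,g]}(z_e),\, H_{[f,[f,g]]}(z_e)+H_{[g,[f,g]]}(z_e))$ up to sign, nonzero by Assumption \ref{assumption:us_non_saturating}. Your Poisson-bracket phrasing and the explicit factorization $(1-u_s(z_e))H_{[g,[f,g]]}(z_e)$ are only cosmetic differences from the paper's direct matrix computation.
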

\begin{proof}
    Since $\xi$ is a submersion at $z_e$, the singular locus  $\Sigma_\mathrm{SA}$ is locally a regular submanifold of codimension two
    near $z_e$. Its tangent space at $z_e$ is given by the kernel of the matrix $\xi'(z_e)$. But,
    \[
        \begin{aligned}
              \xi'(z_e)\, \vphi'(t_b^*) &= -\xi'(z_e) \, \vv{H_+}(z_e) \quad \text{(see the proof of Theorem \ref{thm-tangent})} \\ 
            &=- \begin{pmatrix} 
             \partial_x H_g(z_e) & \partial_p H_g(z_e)\\
             \partial_x H_{[f,g]}(z_e) & \partial_p H_{[f,g]}(z_e)
            \end{pmatrix}
            \begin{pmatrix}
            \partial_p H_+(z_e) \\ -\partial_x H_+(z_e)
            \end{pmatrix}\\
            &= - \begin{pmatrix} H_{[f,g]}(z_e) \\ H_{[f,[f,g]]}(z_e)+H_{[g,[f,g]]}(z_e) \end{pmatrix} \\
            &\ne 0_{\R^2}  \quad \text{(by Assumption \ref{assumption:us_non_saturating})},
        \end{aligned}
    \]
    recalling that $\vphi$ is given from lemma \ref{lmm:switching_curve} by Assumption
    \ref{assumption:Gprime_invertible}.
\end{proof}

\begin{remark}
    From Theorem \ref{thm-tangent}, the tangency property holds even if the singular control at $z_e$ is saturating.
    The main reason of the tangency property comes from the fact that $z_e$ belongs to the singular locus $\Sigma_\mathrm{SA}$.
    However, if the singular control at $z_e$ is not saturating, for instance if $z_e$ is a prior-saturation lift, then the switching
    curve $\Sigma$ is transverse to the singular locus $\Sigma_\mathrm{SA}$ at $z_e$ according to Corollary
    \ref{cor:tansversality_property}.
\end{remark}

\section{Illustration of the prior-saturation phenomenon}{\label{sec-numer}}
The aim of this section is to develop two examples arising in the field of bioprocesses  and magnetic resonance imaging respectively, that will highlight the various concepts introduced in Sections \ref{sec-prior}-\ref{sec-tangent}. 
For the related minimal time problems, we shall also briefly discuss the corresponding optimal syntheses that exhibit prior-saturation points and bridges. 
\subsection{The fed-batch model}{\label{fb-sec}} 
A bioreactor operated in fed-batch is described by the controlled dynamics (see \cite{M99}):
\be{\label{eq-fed-batch2}}
\left\{
\begin{array}{cl}
\dot{s}&=-\mu(s)\left(\frac{M}{v}+s_{in}-s\right)+\frac{Q_{max}(1+u)}{2v}(s_{in}-s), \vspace{0.1cm}\\
\dot{v}&=\frac{Q_{max}}{2}(1+ u),
\end{array}
\right.
\ee 
where $s_{in}$ and $s$ denote respectively the input substrate and substrate concentrations, and $v$ is 
the volume of the reactor\footnote{In contrast with the previous sections in which state variables are $(x_1,x_2)$, we chose to adopt the notation $(s,v)$ that is  commonly used in the bioprocesses literature for fed-batch operations.}. 
The parameter $Q_{max}>0$ is  the maximal speed of the input pump (chosen large enough) 
so that $\frac{Q_{max}}{2}(1+u)$ represents the input flow rate, $u(\cdot)$ being the control variable with values in $[-1,1]$.  
Finally, $M\in \R$ depends on the initial value of micro-organism concentration\footnote{Micro-organism concentration $X>0$ can be expressed as a simple function of $(s,v)$, namely $X=M/v+s_{in}-s$, thus \eqref{eq-fed-batch2} is enough to describe a bioreactor operated in fed-batch mode.}. 
As in many engineering applications (see, {\it{e.g.}}, \cite{BernardAM2}), the kinetics $\mu$ of the reaction is of Haldane type, {\it{i.e.}}, $$
\mu(s)\coloneqq\frac{\mu_h s}{K+s+\frac{s^2}{K_I}},
$$  
with a unique maximum $s^*=\sqrt{KK_I}$ (parameters $\mu_h$, $K$, $K_I$ are positive). We suppose hereafter that $s^*$ belongs to $(0,s_{in})$. This hypothesis is not restrictive since from an applicative point of view, $s_{in}>0$ represents the input substrate concentration (to be treated). Hence, it can be expected that this value is greater than $s^*$, 
see, {\it{e.g.}}, \cite{BernardAM2}. 

This type of growth function models inhibition by substrate (microbial growth is limited when $s$ is too large w.r.t.~$s^*$). 
It is worth mentioning that the set $\mathcal{D}:=(0,s_{in}]\times \R_+^*$ is invariant by \eqref{eq-fed-batch2}. 
For waste water treatment purpose, the problem of interest is:
\be{\label{OCP-FB}}
\inf_{u \in \mathcal{U}} T_u \quad \mathrm{s.t.} \;\; (s(T_u),v(T_u))\in \mathcal{T}, 
\ee
where $\mathcal{T}\coloneqq(0,s_{ref}]\times \{v_{max}\}$ is the target set, $s_{ref}\ll s_{in}$ is a given threshold, and
$v_{max}>0$ denotes the maximal volume of the bioreactor. From a practical point of view, the goal is to treat a volume $v_{max}$ 
of wasted water in minimal time. For more details about this system, we refer to \cite{M99,BMM2}. 

It appears that Problem \eqref{OCP-FB} may exhibit a saturation phenomenon. 
Indeed, by using the PMP, we can check that there is a 
singular locus that is the line segment 
$$\Delta_{SA}= \{s^*\}\times(0,v_{max}],$$ 
and that the singular control can be expressed in feedback form as
$$u_s[v]\coloneqq
\frac{\mu(s^*)\left[M+v(s_{in}-s^*)\right]}{(s_{in}-s^*)Q_{max}}-1,
$$
(writing $\dot{s}=0$ along $s=s^*$). 
It follows that there exists a unique saturation point $$
x_{sat}=(s^*,v^*),
$$
with $v^*\coloneqq\frac{2Q_{max}}{\mu(s^*)}-\frac{M}{s_{in}-s^*}$ and $u_s[v^*]=1$ 
if 
the following condition is fulfilled 
\be{\label{hyp-saturation-bio}}
0<v^*<v_{max}.
\ee
This typically happens when the volume of water to be trated, $v_{max}$, is large w.r.t.~other parameters of the model (like in large scale waste water treatement industries), see \cite{BMM2}. Next, we suppose that \eqref{hyp-saturation-bio} holds true. 

At this step, we wish to know if prior-saturation occurs (according to Propositions \ref{existence-presat} and \ref{prior-vs-init}). 
Doing so, let us check Assumption \ref{main-hyp}. One gets
$$
\delta_0(s,v)=-\mu(s)(M/v+s_{in}-s)Q_{max}/2=-\mu(s)XQ_{max}/2 <0,
$$
hence $\Delta_0\cap \mathcal{D}=\emptyset$ and $\delta_0<0$ in $\mathcal{D}$. 
Now, the singular arc is of turnpike type and Legendre-Clebsch's optimality condition holds true because $\mu$ has a unique maximum for $s=s^*$, see \cite{BayenGM}, or a clock form argumentation in \cite{M99}. In addition, observe that, in the $(s,v)$-plane, trajectories of \eqref{eq-fed-batch2} with $u=-1$ are horizontal, hence, every arc with $u=-1$ and starting at a volume value $v_0<v_{max}$ never reaches the target set $\mathcal{T}$. Finally, 
$\mathcal{T}$ is reachable from $\mathcal{D}$ taking the control $u=+1$ until reaching $v=v_{max}$ and then $u=-1$ until reaching $s_{ref}$.     

Second, let us verify the hypotheses of Proposition \ref{prior-vs-init}. Doing so, 
let $v\mapsto \hat s(v)$ be the unique solution to the Cauchy problem
$$
\frac{ds}{dv}=\left(-\frac{\mu(s)}{Q_{max}}\left[\frac{M}{v}+s_{in}-s\right]+
\frac{s_{in}-s}{v}
\right), \quad s(v_{max})=s^*,
$$
(the solution of \eqref{eq-fed-batch2} with $u=1$ backward in time from $(s^*,v_{max})$). 
From \cite{BMM2}, if there exists $v_*\in (0,v^*)$ such that $\hat s(v_*)=s^*$, then optimal paths starting at a volume value sufficiently small necessarily contain a singular arc (this actually follows using the PMP). Now, by using Cauchy-Lipschitz's Theorem, the existence of $v_*$ is easy to verify when $M=0$, and thus, it is also verified 
for small values of the parameter $M$ (by a continuity argumentation). To pursue our analysis, we suppose next the existence of $v_* \in (0,v^*)$. 
We are then in a position to apply Propositions \ref{existence-presat} and \ref{prior-vs-init}. 
It follows that there is a unique volume value $v_e\in (0,v^*)$ such that any singular arc starting at a volume value $v_0<v_e$ will be optimal only until $v_e$. In addition, combining this result with a study of extremals using the PMP, we obtain that
\begin{itemize}
\item if the initial condition is $(s^*,v_0)$ with $v_0<v_e$, then the optimal path is of the form $\sigma_s\sigma_+^b\sigma_-$ (see below for the definition of $\sigma_+^b$);  
\item if the initial condition is $(s^*,v_0)$ with $v_0\geq v_e$, then the optimal path is of the form $\sigma_+\sigma_-$ ; 
\item for any initial condition $(s_{in},v_0)$ with $v_e\leq v_0<v_{max}$, the optimal path is of the form 
$\sigma_- \sigma_+ \sigma_-$ where the first switching time appears on a switching curve 
emanating from $(s^*,v_e)$. 
\end{itemize}
To determine the prior-saturation point $x_e=(s^*,v_e)$ numerically, we proceed as in Section \ref{sec-tangent}. 
For this application model, it is convenient to introduce an {\it{extended target set}} as   $\overline{\mathcal{T}}=(0,s_{in}] \times \{v_{max}\}$ 
(observe that for initial conditions on $\overline{\mathcal{T}}$, 
optimal paths are $\sigma_-$ arcs).  
In this context, a {\it{bridge}} is defined as an arc $\sigma_+$ (denoted by $\sigma_+^b$) on $[0,t_b]$ such that 
\[
    \phi(0)=\dot{\phi}(0)=\phi(t_b)=0 \quad \mathrm{and} \quad v(t_b)=v_{max},
\]
where $\phi$ is the switching function defined by \eqref{eq:switching_function} and $t_b$ is the time to 
steer $x_e$ at time $0$ to the extended target set $\overline{\mathcal{T}}$ with $u=+1$. To compute $x_e$, we need to compute the extremities of the bridge
together with its length. Denoting by $t_b^*$ the length of the bridge and by 
$z_b^*$ its extremity in the cotangent bundle whose projection on the state space belongs to $\overline{\mathcal{T}}$, the point
$(t_b^*, z_b^*)$ is then a solution of the equation $F_\mathrm{bio} = 0$ with
\begin{equation}
    F_\mathrm{bio}(t_b, z_b) \coloneqq
    \begin{pmatrix}
        H_{[f,g]}(\exp(-t_b \vv{H_+})(z_b)) \\
        H_g(\exp(-t_b \vv{H_+})(z_b)) \\
        H_+(z_b) + p^0 \\
        H_g(z_b) \\
        v_b-v_{max} \\
    \end{pmatrix},
    \label{eq:F_prior_lift_bio}
\end{equation}
where $(s_b,v_b)$ is the projection of $z_b$ on the state space and vector fields $f,g$ are given by \eqref{eq-fed-batch2}.  
From Theorem \ref{thm-tangent}, the bridge is then tangent to the switching curve at $x_e$ (the projection of $\Sigma$ given by \eqref{eq:switching_curve} onto the state space). To conclude this part, let us comment  Fig.~\ref{fig:synthese_bio_1} on which the optimal synthesis is plotted in a neighborhood of the prior-saturation point:  
\begin{itemize}
\item In black, the switching curve $\Sigma^\pi$ emanates from the prior-saturation point. It is computed using the shooting functions $F_{bio}$. 
\item The synthesis is such that trajectories are horizontal ($u=-1$) until reaching $\Delta_{SA}$ or the switching curve. For initial conditions with a substrate concentration less than $s^*$ and $v_0\geq v_e$, then $u=1$ is optimal until reaching $\overline{\mathcal{T}}$. 
\end{itemize}

\begin{figure}[ht!]
\centering
\begin{tikzpicture}[scale=0.05]

    \def\bordG{-10}
    \def\bordD{220}
    \def\bordB{-10}
    \def\bordH{115}

    \coordinate (Nord) at (0,\bordH);
    \coordinate (Sud) at (0,\bordB);
    \coordinate (Est) at (\bordD,0);
    \coordinate (Ouest) at (\bordG,0);

    \draw [->,thin] (Ouest) -- (Est)  node[below] {$s$};
    \draw [->,thin] (Sud)   -- (Nord) node[left]  {$v$};
    
    \def\csin{200}
    \def\csstar{100}
    \def\csref{20}
    \def\csA{40}
    \def\csB{65}
    \def\csC{85}
    
    \def\cvA{10}
    \def\cve{35}
    \def\cvB{45}
    \def\cvsat{52}
    \def\cvC{70}
    \def\cvmax{100}
    
    \draw [densely dashed, thin] (\csin,0) -- (\csin,\bordH); \draw [below] (\csin,0) node{$s_{in}$};
    
    \draw [densely dashed, thin] (0, \cvmax) -- (\bordD, \cvmax); \draw [left] (0, \cvmax) node{$v_{max}$};
    
    \draw [thin] (\csstar,0) -- (\csstar,\bordH); 
    \draw [below] (\csstar,0) node{$s^*$}; \draw [below right] (\csstar,\bordH) node{$\Delta_{SA}$};
    
    \draw [thin] (0,\cvmax) -- (\csin,\cvmax) node[near end, above] {$\overline{\mathcal{T}}$};
    
    \draw [very thick] (0,\cvmax) -- (\csref,\cvmax) node[midway, above] {${\mathcal{T}}$};
    \def\ytick{5em}
    \draw [very thick] ([yshift=-\ytick] \csref,\cvmax) -- ([yshift=\ytick] \csref,\cvmax);
    \draw [very thick] ([yshift=-\ytick] 0,\cvmax) -- ([yshift=\ytick] 0,\cvmax);

    \draw [left] (\csstar, \cve) node{$x_{e}$};
    \draw [left] (\csstar, \cvsat) node{$x_{sat}$};

    \draw [thick, green] (\csstar,\cvA) -- (\csstar,\cve) node[midway, sloped] {\scriptsize \ding{228}}
    node[midway, left]{$\textcolor{green}{\arc_s}$};
    
    \draw [thick, blue] (\csin,\cvA) -- (\csstar,\cvA) node[midway, sloped, rotate=180] {\scriptsize \ding{228}}
    node[midway, below]{$\textcolor{blue}{\arc_-}$};    
    
    \draw [thick, blue] (\csin,\cve) -- (\csstar,\cve) node[midway, sloped, rotate=180] {\scriptsize \ding{228}}
    ; 
    
    \def\pentex{18}
    \def\pentey{14}
    \def\csD{120}
    \draw [thick] (\csstar,\cve) .. controls +(\pentex,\pentey) and +(0,-10) .. (\csD,\cvC) node[midway, right] {$\Sigma^\pi$};
    \draw [thick] (\csD,\cvC) .. controls +(0,10) and +(15,-6) .. (\csstar,\cvmax);
    
    \draw [thick, red] (\csstar,\cve) .. controls +(2*\pentex,2*\pentey) and +(20,-5) .. (\csB,\cvmax)
    node[pos=0.6, sloped, rotate=180] {\scriptsize \ding{228}} node[pos=0.65, below] {$\textcolor{red}{\arc_+^b}~$};
    
    \draw [thick, red] (\csstar,\cvB) .. controls +(\pentex,\pentey) and +(20,-5) .. (\csA,\cvmax)
    node[pos=0.6, sloped, rotate=180] {\scriptsize \ding{228}}; 
    
    \draw [thick, red] (\csD,\cvC) .. controls +(-5,10) and +(20,-5) .. (\csC,\cvmax)
    node[pos=0.4, sloped, rotate=180] {\scriptsize \ding{228}}; 
    
    \draw [thick, blue] (\csin,\cvC) -- (\csD,\cvC) node[midway, sloped, rotate=180] {\scriptsize \ding{228}}
    ; 
    
    \draw [thick, blue] (\csC,\cvmax) -- (\csB,\cvmax) node[midway, sloped, rotate=180] {\scriptsize \ding{228}}
    ; 
    \draw [thick, blue] (\csB,\cvmax) -- (\csA,\cvmax) node[midway, sloped, rotate=180] {\scriptsize \ding{228}}
    node[midway, above]{$\textcolor{blue}{\arc_-}$};
    \draw [thick, blue] (\csA,\cvmax) -- (\csref,\cvmax) node[midway, sloped, rotate=180] {\scriptsize \ding{228}}
    ; 
    
    \draw [green] (\csstar, \cvA) node{\footnotesize $\bullet$};
    \draw [red] (\csstar, \cve) node{\footnotesize $\bullet$};
    \draw [black] (\csstar, \cvsat) node{\footnotesize $\bullet$};
    \draw [red] (\csstar, \cvB) node{\footnotesize $\bullet$};
    \draw [blue] (\csin, \cvA) node{\footnotesize $\bullet$};
    \draw [blue] (\csin, \cve) node{\footnotesize $\bullet$};
    \draw [blue] (\csin, \cvC) node{\footnotesize $\bullet$};
    \draw [blue] (\csA, \cvmax) node{\footnotesize $\bullet$};
    \draw [blue] (\csB, \cvmax) node{\footnotesize $\bullet$};
    \draw [blue] (\csC, \cvmax) node{\footnotesize $\bullet$};  
    \draw [red] (\csD, \cvC) node{\footnotesize $\bullet$};    
    
\end{tikzpicture}
\vspace{-1em}
\caption{Minimal time synthesis for \eqref{OCP-FB}: the target set $\mathcal{T}=(0,s_{ref}]\times \{v_{max}\}$ is in black (left).
The switching curve $\Sigma^\pi$ (in black) is tangent to the bridge $\sigma_+^b$ (in red) at $x_e$. Arcs with $u=+1$ (resp.~$u=-1$) are depicted in red (resp.~in blue).}
\label{fig:synthese_bio_1}
\end{figure}
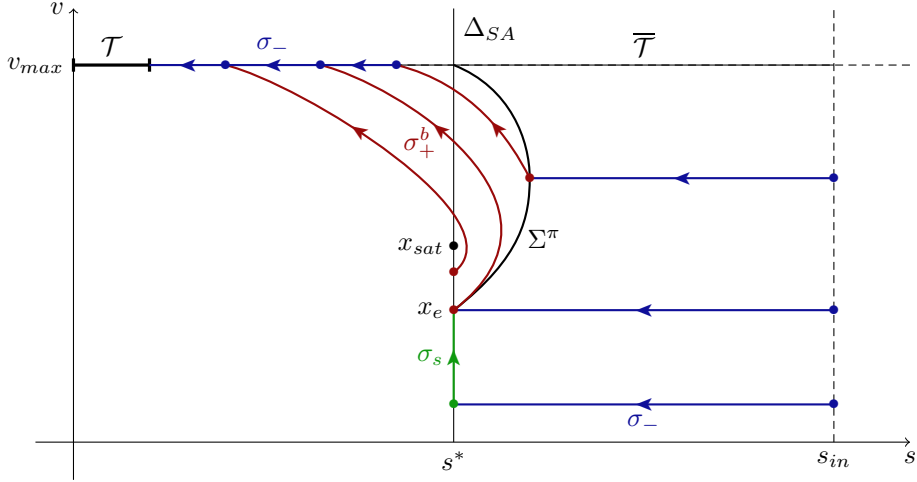

\subsection{The MRI model}{\label{mri-sec}} 

In Nuclear Magnetic Resonance (NMR) \emph{saturating} one chemical species consists in driving the magnetization
vector representing the state to zero. In Magnetic Resonance Imaging (MRI) a challenging problem is to maximize
the \emph{constrast} between two observed species (for instance, healthy tissues and tumors) saturating one species.
For the model, we consider an ensemble of spin-1/2 particles, excited by a radio-frequency (RF) field which 
is ideally assumed homogeneous, each spin of this ensemble being described by its magnetization vector whose
dynamics is governed in a specific rotating frame, after some normalizations and considering the 2-dimensional
case, by the
\emph{Bloch equation} \cite{Levitt:2008}:
\begin{equation}
    \label{eq:Bloch_Equation}
    \left\{ 
    \begin{aligned}
        \dot{x}_1 &= -\Gamma x_1 - u\, x_2, \\
        \dot{x}_2 &= \gamma (1-x_2) + u\, x_1,
    \end{aligned}
    \right.
\end{equation}
where $x\coloneqq(x_1, x_2)$ is the normalized magnetization vector, where $(\gamma, \Gamma)$ is a couple of parameters
satisfying the physical constraint $0 < \gamma \le 2\Gamma$ and depending on the longitudinal and transversal relaxation constants
specific to the observed species, and where $u$ is the RF-field which plays the role of the control.
The time-minimal problem of interest here
is the following:
\begin{equation}
    \label{ocp:MRI}
    \inf_{u \in \mathcal{U}} T_u \quad \mathrm{s.t.} \quad x_u(T_u, x_0) = O \coloneqq (0, 0), 
\end{equation}
where the initial condition $x_0$ belongs to the set $B\coloneqq\{ (x_1,x_2) \in \R^2~;~ x_1^2+x_2^2 \le 1 \}$ 
called the \emph{Bloch ball} and where $x_u(\cdot,x_0)$
is the unique solution of \eqref{eq:Bloch_Equation} such that $x_u(0, x_0) = x_0$.

\begin{remark}
    The problem of saturation in MRI is the problem \eqref{ocp:MRI} with $x_0 = N$, where $N \coloneqq (0,1)$
    is the North pole of the Bloch ball. We refer to {\rm{\cite{cots3, cots2}}} for more details about the saturation and
    contrast problems in MRI.
    In {\rm{\cite{cots3}}}, the following optimal synthesis is constructed:
    the authors give the optimal paths to go from $N$ to any reachable point of the Bloch ball. 
    Hence, the initial point is fixed to the North pole while the final point may be seen as a parameter.
    Here, we are interested in the converse problem, that is, the parameter is the initial condition and we want to steer the
    system to a given target, which is the origin $O$. 
    The common problem in these two cases is the problem of saturation where the initial condition is $N$ and where
    the target is $O$.
\end{remark}
 
In this MRI application \cite{cots3}, the singular locus has a singularity at the intersection of two lines.
Setting $\delta \coloneqq\gamma - \Gamma$, the singular locus is described by 
\[
    \Delta_{SA} = \Delta_{SA}^h \cup \Delta_{SA}^v,
\]
where $\Delta_{SA}^h \coloneqq \{x_2 = \gamma/(2\delta)\}$ is a horizontal line and where $\Delta_{SA}^v \coloneqq \{x_1=0\}$ 
is the vertical axis. On the vertical axis, the singular control is zero while on the
horizontal line, the singular control is given in feedback form by $$ u_s[x_1,x_2] \coloneqq \gamma(2\Gamma-\gamma)/(2\delta x_1).$$
Considering only the half space $x_1 \le 0$ of the Bloch ball (this is possible due to a discrete symmetry)
and restricting $(\gamma, \Gamma)$ to the interesting case $0 < 3\gamma \le 2\Gamma$ (in this case, the horizontal
line cuts the Bloch ball), there exists only one saturation point denoted by
$x_\mathrm{sat}\in \Delta_{SA}$. The point $x_\mathrm{sat}$ belongs to the set $\Delta_{SA}^h \cap \{x_1 < 0 \}$,
it satisfies $u_s(x_\mathrm{sat}) = 1$, and it is given by
\[
    x_\mathrm{sat} = \left( \frac{\gamma(2\Gamma-\gamma)}{2\delta}, \frac{\gamma}{2\delta} \right).
\]
Following \cite{cots3}, we introduce the concept of bridge.
An arc $\arc_+$ or $\arc_-$ with control $u=+1$ or $u=-1$, is called a \emph{bridge} on $[0,t_b]$
if its extremities correspond to non ordinary switching points, that is, if 
\[
    \phi(0)=\dot{\phi}(0)=\phi(t_b)=\dot{\phi}(t_b)=0,
\]
where $\phi$ is the switching function defined by \eqref{eq:switching_function}.
According to \cite{cots3}, there exists a bridge with $u=+1$ denoted $\sigma_+^b$ connecting $\Delta_{SA}^h$ and $\Delta_{SA}^v$.
We denote by $x_e \coloneqq (x_{e,1}, x_{e,2})$ the extremity of the bridge on the horizontal line $\Delta_{SA}^h$ and we
can now restrict  the analysis to the following situation.
We assume that the following conditions are satisfied by the couple of parameters $(\gamma, \Gamma)$
(see Fig.~3 of \cite{cots3} and the description that comes after for details):
\begin{enumerate}
    \item[$\bullet$] $x_e$ belongs to the Bloch ball $B$ (this implies in particular that $3\gamma \le 2\Gamma$),
    \item[$\bullet$] $0 < \gamma$ (this comes from the physical constraint), 
    \item[$\bullet$] $0 \le (2\Gamma^2-\gamma\Gamma+1) \exp((\alpha-\gamma)t_0) - 2 \delta$ (hence the origin $O$ is reachable by a Bang-Singular
    sequence from $x_\mathrm{sat}$ and so also from $x_e$),
\end{enumerate}
where $\alpha \coloneqq \delta/2$ and $t_0 \coloneqq \arctan( -{\beta}/{\alpha})/{\beta}$
with $\beta \coloneqq \sqrt{1-\alpha^2}$.
In this setting, for any initial condition $x_0 \coloneqq (x_{0,1}, x_{0,2}) \in \Delta_{SA}^h \cap B$ such that $x_{0,1} \le x_{e,1}$,
the optimal trajectory (see \cite{cots3}) is of the form $\arc_s \arc^b_+ \arc_0$, that is composed of a singular arc on $\Delta_{SA}^h$ followed
by the bridge with $u=+1$ and ending with a singular arc $\sigma_0$  along $\Delta_{SA}^v$ with $u=0$. The first singular arc reduces to a point
if $x_0 = x_e$. At $x_e$, the singular control is not saturating, so, in conclusion, the point $x_e$ is a prior-saturation point.

\begin{remark}
    In the MRI application, Assumption \ref{main-hyp} is not exactly satisfied since the collinearity set $\Delta_0$ is non-empty and plays a role
    in the optimal synthesis, such as the singularity of the singular locus at the intersection of the two lines.
    However, the singular arcs are turnpikes and Legendre-Clebsch optimality condition holds.
    Besides, there exists a prior-saturation point and so this case is more general than the fed-batch application.
    We will see hereinafter that the tangency property holds at the prior-saturation point and that the switching curve is transverse
    to the singular locus.
\end{remark}

We end this part by showing how to compute the prior-saturation point $x_e$ and by giving the optimal synthesis near $x_e$
for an initial condition on the horizontal singular line. To compute $x_e$, we need to compute the extremities of the bridge
together with its length. Denoting by $t_b^*$ the length of the bridge and by 
$z_b^*$ the extremity of the bridge in the cotangent bundle whose projection on the state space belongs to $\Delta_{SA}^v$, the point
$(t_b^*, z_b^*)$ is then a solution of the equation $F_\mathrm{mri} = 0$ with
\begin{equation}
    F_\mathrm{mri}(t_b, z_b) \coloneqq
    \begin{pmatrix}
        H_{[f,g]}(\exp(-t_b \vv{H_+})(z_b)) \\
        H_g(\exp(-t_b \vv{H_+})(z_b)) \\
        H_+(z_b) + p^0 \\
        H_{[f,g]}(z_b) \\
        H_g(z_b)
    \end{pmatrix},
    \label{eq:F_prior_lift_mri}
\end{equation}
where the vector fields $f$ and $g$ are given by \eqref{eq:Bloch_Equation} and where the Hamiltonians, the Hamiltonian lifts
and the Hamiltonian vector field are defined in Section \ref{sec:prior_lift_intro}.
We recognize here a function of the form \eqref{eq:F_prior_lift} without any additional parameter $\lambda$ and so, 
$z_e \coloneqq \exp(-t_b^* \vv{H_+})(z_b^*)$ is the prior-saturation lift such that $\pi(z_e) = x_e \in \Delta_{SA}^h$.
Finally, the optimal synthesis near $x_e$ is given on Fig.~\ref{fig:synthesis_mri}.
The optimal solution from the initial condition $x_0 \in \Delta_{SA}^h$ is of the form 
$\textcolor{black}{\arc_s} \textcolor{black}{\arc_+^b} \textcolor{black}{\arc_0}$. The red arc $\textcolor{black}{\arc_+^b}$
is the bridge starting from $x_e$, it is a part of the forward semi-orbit $\Gamma_+$ of $\dot{z} = \vv{H_+}(z)$ starting from $z_e$
projected into the state space. The black curve $\Sigma^\pi$ is the existing part in the optimal synthesis of the projection of the
switching curve $\Sigma$ defined by \eqref{eq:switching_curve}. According to the tangency property from Theorem \ref{thm-tangent},
the arc $\textcolor{black}{\arc_+^b}$ is tangent to $\Sigma^\pi$ at the prior-saturation point $x_e$.
Note also that the switching curve $\Sigma^\pi$ is transverse to the singular locus $\Delta_{SA}^h$ in accordance with Corollary 
\ref{cor:tansversality_property}.

\begin{remark}
    One can see from \eqref{eq:F_prior_lift_bio} and \eqref{eq:F_prior_lift_mri} that neither $F_\mathrm{bio}$ nor $F_\mathrm{mri}$ include any conditions about reaching the target.
    Indeed, to compute the prior-saturation lift we do not necessarily need the final
    conditions from the shooting functions. We only need a part of the shooting function which
    satisfies Assumptions \ref{assumption:us_non_saturating} and 
    \ref{assumption:Gprime_invertible}. In these two applications, we have by Proposition
    \ref{prop:prior_lift_unique} that the bridge is locally unique.
    Note that this gives us a new procedure to solve the shooting equations: we can first solve the reduced problem of computing the prior-saturation lift and then we can use this information to connect the initial condition to the prior-saturation lift and to connect
    the extremity of the bridge to the target.
    
\end{remark}

\begin{figure}[ht!]
\centering
\begin{tikzpicture}[scale=0.05]

    \def\zsing{-50}
    \def\pentex{20}
    \def\pentey{-25}

    \coordinate (O) at (0,0);
    \coordinate (N) at (0,100);
    \coordinate (W) at (-100,0);
    \coordinate (S) at (0,-100);
    \coordinate (A) at (-80,\zsing);
    \coordinate (L) at (-60,\zsing);
    \coordinate (B) at (-40,\zsing);
    \coordinate (D) at (0,-25);
    \coordinate (X) at (0,\zsing);
    \coordinate (Nord) at (0,30);
    \coordinate (Sud) at (0,-110);
    \coordinate (Est) at (30,0);
    \coordinate (Ouest) at (-110,0);

    \draw [->,thin] (Ouest) -- (Est)  node[below] {$x_1$};
    \draw [->,thin] (Sud)   -- (Nord) node[left]  {$x_2$};
    
    \draw (W) arc (180:270:100);
    \draw (W) arc (180:163:100);
    \draw (S) arc (270:290:100);
    
    \draw [densely dashed, thin] (-110,\zsing) -- (30,\zsing) node[below]{\small $\Delta_{SA}^h$};

    \draw [thick, green] (A) -- (L) node[midway, sloped] {\scriptsize \ding{228}} node[midway, below]{$\textcolor{green}{\arc_s}$};

    \draw [thick, green] (D) -- (O) node[midway, sloped] {\scriptsize \ding{228}} node[midway, right]{$\textcolor{green}{\arc_0}$};

    \draw [thick] (L) .. controls +(\pentex,\pentey) and +(-10,-7) .. (X) node[midway, below] {$\Sigma^\pi$};

    \draw [thick, red] (L) .. controls +(\pentex,\pentey) and +(-10,-20) .. (D)
    node[near end, sloped] {\scriptsize \ding{228}} node[near end, above] {$\textcolor{red}{\arc_+^b}$~~~};

    \draw (A) node[above]       {$x_0$} node[green]{\footnotesize $\bullet$};
    \draw (L) node[above]       {$x_e$} node[red]{\footnotesize $\bullet$};
    \draw (B) node[above]       {$x_\mathrm{sat}$} node[black]{\footnotesize $\bullet$};
    \draw (O) node[above right] {$O$} node{\footnotesize $\bullet$};
    \draw (D) node[green] {\footnotesize $\bullet$};

\end{tikzpicture}
\vspace{-1em}
\caption{Optimal synthesis near the prior-saturation point $x_e$ in the left part of the Bloch ball.}
\label{fig:synthesis_mri}
\end{figure}
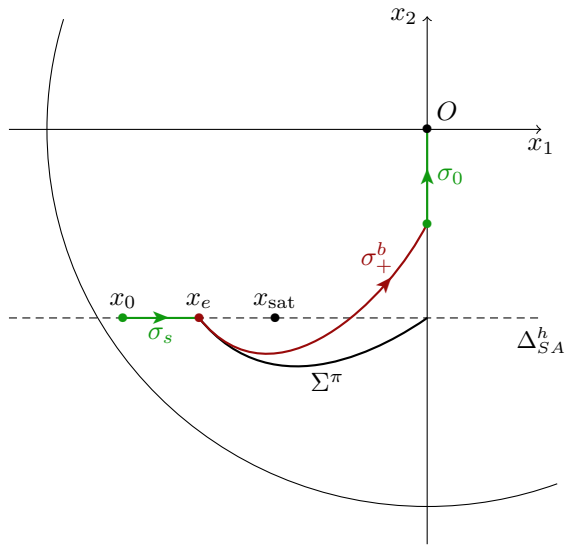

\section{Conclusion}
Even though the tangency property between the bridge and the switching curve provides useful informations on the minimum time synthesis 
when prior saturation occurs (typically, under assumptions of Proposition \ref{existence-presat}), it remains valid in a larger context (under the  hypotheses of Theorem \ref{thm-tangent}) and not only in the framework of saturation and prior-saturation of the singular control for affine-control systems in the plane. 
This property also appears in other settings such as in Lagrange control problems governed by one-dimensional systems, see, {\it{e.g.}}, \cite{filtration2017}. 
Future works could then investigate  prior-saturation phenomenon and the  tangency property in other frameworks or in dimension $n\geq 3$. 
\section*{Acknowledgments}
We are very grateful to E. Tr\'elat for helpful discussions and suggestions about the tangency property at the prior-saturation point. 

 
\end{document}